\DeclareMathOperator{\intr}{Int}
\newcommand{\geH}{\succeq_{_\Ha}}
\newcommand{\Ha}{\mathcal H}
\newcommand{\Fa}{\mathcal F}
\newcommand{\Ga}{\mathcal G}
\newcommand{\Ev}[1]{\mathcal E(#1)}
\newcommand{\de}[1]{\delta_{#1}}
\newcommand{\br}[2]{[#1,#2]}
\newcommand{\bbr}[2]{\big[#1,#2\big]}
\newcommand{\Bbr}[2]{\Big[#1,#2\Big]}
\newcommand{\Print}[3][]{\Prob[#1]{#2;#3}}
\begin{document}
\mybic
\date \today 
 
\title
[Minimax]
{A Minimax Lemma and its Applications}

\subjclass[2020]{
Primary: 28C05, 
Secondary: 39B72, 39B62
.} 

\keywords{
Conglomerability,
Domination property,
Fan Theorem, 
Integral representation,
Integral hull,
Linear inequalities,
Minimax.
}

\begin{abstract} 
We prove an easy version of the minimax theorem 
with no topological assumption. We deduce from it
some domination criteria as well as an application
to $p$-summing operators.
\end{abstract}

\maketitle

\ToWhom{
This paper is dedicated to the memory of Riccardo Damasio.
}

\section{Introduction}

The main result of this paper is a minimax lemma which
we prove, in various degrees of generality, in section 
\ref{sec minimax}, using of a convexification technique 
based on finitely additive integration. In the subsequent 
sections we obtain applications to several, apparently 
independent problems which are rarely recognized as 
minimax problems. In section \ref{sec dom} we examine 
domination criteria for functions defined on arbitrary 
sets and obtain a non linear generalisation of a well
known Theorem of Ky Fan \cite{fan_56}. In section 
\ref{sec weak} we study a weaker form of domination 
similar to absolute continuity and involving convergence 
of functions to zero. In section \ref{sec choquet} we 
establish several integral representation theorems, 
extending the classical findings of Choquet \cite{choquet_56} 
and of Strassen \cite{strassen}. Eventually in section 
\ref{sec sum} we obtain applications to summable 
families of functions similar to the results of
Grothendieck and Pietsch. In all these applications 
the general structure of our minimax lemma often
permits extensions of the classical versions or just
alternative proofs.

As is well known, the problem of finding sufficient 
conditions for the minimax equality
\begin{equation}
\label{minimax}
\inf_{f\in\Fa}
\sup_{x\in X}
f(x)
	=
\sup_{x\in X}
\inf_{f\in\Fa}
f(x)
\end{equation}
originated in the theory of zero sum games with the 
classical work of von Neumann \cite{neumann} and
had immediate applications in several fields, e.g.
the theory of sequential statistical decisions of Wald
\cite{wald}. The abstract mathematical problem
received great impulse from the infinite dimensional 
generalisations obtained by Ky Fan \cite{fan_53,fan} 
and Maurice Sion \cite{sion}, later extended or 
improved by a number of other authors including
Ha \cite{ha}, 
Kindler \cite{kindler_93}, 
K\"onig \cite{konig}, 
Simons \cite{simons_94}
(who also discusses the different approaches) and 
Terkelsen \cite{terkelsen}. 
The conditions originally considered by Fan and Sion 
(and in some more general form also by much of the 
following literature) involve 
\tiref a
compactness of the space $X$,
\tiref b
some degree of convexity of $\Fa$ and concavity 
in $X$, 
\tiref c
some form of semicontinuity of the functions in $\Fa$. 
More recent contributions have replaced convexity 
or concavity with assumptions of a purely topological 
nature, such as connectedness (see e.g. \cite{horvath}, 
\cite{konig} and \cite{terkelsen}). 

The approach to the minimax problem we follow in this
work is based on the simple observation that even if, in 
the general case, the left hand side of \eqref{minimax} 
strictly exceeds the right hand one, we may still find a 
convenient extension $\widehat\Fa$ of the set $\Fa$ 
with the property that
\begin{equation}
\label{minimax Z}
\inf_{h\in\widehat\Fa}
\sup_{x\in X}
h(x)
	=
\sup_{x\in X}
\inf_{h\in\Fa}
h(x).
\end{equation}
In Theorem \ref{th minimax} we prove that one such 
extension is the integral hull, $\intr(\Fa)$, provided
that the family $\Fa$ is
\tiref i
pseudo concave on $X$ and 
\tiref{ii}
pointwise lower bounded. 
If we add to \tiref i -- \tiref{ii} the further assumption
that $\Fa$ is $B$-convex, a newly defined property
related to the existence of sub barycentres, then 
we recover the original minimax equality 
\eqref{minimax}. A comparison with the traditional 
assumptions of this literature, particularly 
compactness, shows that our result is indeed 
a generalization of those of Fan and Sion.

\subsection{Notation}

If $X$ and $Y$ are non empty  sets the symbol 
$\Fun{X,Y}$ (resp. $\Fun X$) denotes the family 
of all functions which map $X$ into $Y$ (resp. 
into $\R$). The topology of pointwise convergence
assigned to $\Fun X$ is referred to as the $X$-%
topology and the prefix $X$ will be used to mean
that a given class or operation is defined relative
to such topology. 
The set of all evaluations $e_x$ at some $x\in X$ 
will be denoted by $\Ev X$. The symbol $\Prob X$ 
(resp. $\Prob\A$) will designate the family of all 
finitely additive probabilities defined on the power 
set of $X$ (resp. defined on the algebra $\A$ of
subsets of $X$). As usual, $ba(\A)$ is the vector
space spanned by $\Prob\A$. If $m\in\Prob X$ 
and $f\in L^1(m)$, we shall use the symbols 
$\int fdm$ or $m(f)$ interchangeably. If 
$\Fa\subset\Fun X$, we write
\begin{equation}
\label{P(F)}
\Print X\Fa
	=
\left\{m\in\Prob X:\Fa\subset L^1(m)\right\}.
\end{equation}
Most often we shall be concerned with the set
$\Print\Fa{\Ev{X_0}}$ for some $\Fa\subset\Fun X$
and $X_0\subset X$.

\section{Barycentrical convexity}
\label{sec convex}

In minimax problems two properties are important: 
a form of boundedness and some extension of the notion 
of concavity/convexity for functions defined on an 
abstract set.

\begin{definition}
\label{def LB}
A family $\Fa\subset\Fun X$ is pointwise lower
bounded if
$\inf_{f\in\Fa}f(x)>-\infty$
for every $x\in X$.
\end{definition}

If $\Fa$ consists of the $Y$-sections of some function
$F\in\Fun{X\times Y}$, then pointwise lower boundedness
is implicit in the classical assumptions that $Y$ is 
compact and that the $X$-sections of $F$ are 
lower semicontinuous on $Y$.

Concerning convexity, we define two distinct notions which 
involve a sequence in $X$ and which, for this reason, we 
qualify as ``{\it pseudo}''. When the intervening sequence 
is replaced with a single point -- e.g. when $X$ is compact 
and $\Fa$ consists of lower semicontinuous functions -- this 
qualification is dropped. 

\begin{definition}
\label{def sion}
A family $\Fa\subset\Fun X$ is pseudo convex on $X$ if 
for all $x,x'\in X$ and $0\le t\le 1$ there exists a sequence 
$\seqn x$ in $X$ such that
\begin{equation}
\label{pseudo convex}
tf(x)+(1-t)f(x')+2^{-n}
	\ge
f(x_n)
\qquad
f\in\Fa,\ n\in\N.
\end{equation}
\end{definition}
Pseudo concavity is defined similarly. The definition 
of convexity for a family of functions on an abstract 
set $X$ is due to Fan \cite{fan_53} and was used by 
Sion \cite[2.2]{sion} under the name of convexlikeness
and by Le Cam \cite[Definition 7]{le_cam_64} who 
called it subconvexity. Pseudo convexity was introduced by Irle 
\cite[Theorem 3.1]{irle_85}. An intermediate property 
was examined by K\"onig \cite[Lemma]{konig_68} and 
by Terkelsen \cite[Theorem 2]{terkelsen} who assumed 
that for each pair $x,x'\in X$ there exists $x_0\in X$ 
such that
\begin{equation}
\label{terkelsen}
f(x)+f(x')
	\ge 
2f(x_0)
\qquad
f\in\Fa.
\end{equation}

\begin{definition}
\label{def barycentre}
(a)
A pseudo sub barycentre of $m\in\Print X \Fa$ on 
$\Fa\subset\Fun X$ is a sequence $\seqn x$ 
satisfying
\begin{equation}
\label{pseudo barycentre}
\int_Xf(x)m(dx)+2^{-n}
\ge
f(x_n)
\qquad
f\in\Fa,\ n\in\N.
\end{equation}
The set of probabilities admitting a pseudo 
sub barycentre on $\Fa$ is denoted by 
$\Print[\beta] X\Fa$.
(b)
$\Fa$ is pseudo $B$-convex on $X$ if each 
$m\in\Print X\Fa$ admits a pseudo sub barycentre 
on $\Fa$. 
\end{definition}

To understand the relation between the two
definitions above, observe that the point mass 
at each $x\in X$ is trivially in $\Print[\beta]X\Fa$
and that $\Fa$ is pseudo convex on $X$ if and 
only if every convex combination of point masses 
is itself an element of $\Print[\beta]X\Fa$. Thus, 
if $\Print[\beta]X\Fa$ is a convex set (e.g. if $\Fa$
is pseudo $B$-convex on $X$) then $\Fa$ is pseudo 
convex on $X$. On the other hand it is easily
deduced from Definitions \ref{def sion} and 
\ref{def barycentre} that if $\Fa$ is pseudo convex
then $\Print[\beta]X\Fa$ is a convex set.

Pseudo $B$-convexity of $\Fa$ on $X$ may be 
written as the condition that for each $\varepsilon>0$ 
and $m\in\Print X\Fa$ the family of sets 
$\{x\in X:f(x)-m(f)\le\varepsilon\}$ 
with $f\in\Fa$ has non empty intersection. This 
remark suggests an obvious link with compactness. 

\begin{lemma}
\label{lemma classical}
A family $\Fa\subset\Fun X$ which is pseudo convex 
on $X$ is also pseudo $B$-convex on $X$ in either 
one of the following special cases:
(a)
$\Fa$ is finite,
(b)
$X$ is a compact set and each $f\in\Fa$ is lower 
semicontinuous or
(c)
$\Fa$ is totally bounded in the metric of uniform 
distance on $X$.
\end{lemma}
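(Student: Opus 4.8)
The plan is to prove each of the three cases (a), (b), (c) separately, in each case taking an arbitrary $m\in\Print X\Fa$ and producing a pseudo sub barycentre, i.e.\ a sequence $\seqn x$ satisfying $m(f)+2^{-n}\ge f(x_n)$ for all $f\in\Fa$ and $n\in\N$. The remark preceding the lemma reformulates exactly what is needed: for each $\varepsilon>0$ the family of sets $A_f^\varepsilon=\{x\in X:f(x)-m(f)\le\varepsilon\}$, indexed by $f\in\Fa$, has nonempty intersection. Indeed, if for each $\varepsilon=2^{-n}$ I can find a point $x_n\in\bigcap_{f\in\Fa}A_f^{2^{-n}}$, then that sequence is precisely a pseudo sub barycentre. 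So the whole lemma reduces to a finite-intersection-property argument, and the three cases differ only in how the intersection is shown to be nonempty.

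For case (a) I would argue directly. Since $\Fa$ is finite, say $\Fa=\{f_1,\dots,f_k\}$, pseudo convexity applied repeatedly (forming iterated convex combinations with the uniform weights $1/k$, or more carefully tracking the convex combination realizing $m$ on the finite family) yields a sequence whose deficiencies can be made simultaneously small; with only finitely many inequalities to satisfy there is no obstruction to choosing, for each $n$, a single point beating the bound $2^{-n}$ for all $k$ functions at once. For case (b) I would use compactness of $X$ together with lower semicontinuity. Each set $A_f^\varepsilon=\{x:f(x)\le m(f)+\varepsilon\}$ is closed because $f$ is lower semicontinuous. Pseudo convexity of $\Fa$ guarantees (via the convex-set characterization $\Print[\beta]X\Fa$ and the fact that finite convex combinations of point masses admit pseudo sub barycentres) that every finite subfamily of the $A_f^\varepsilon$ has nonempty intersection, so this is a family of closed sets with the finite intersection property in a compact space, whence $\bigcap_{f\in\Fa}A_f^\varepsilon\ne\emptyset$.

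For case (c), total boundedness in the uniform metric, I would replace the exact family $\Fa$ by a finite $\varepsilon$-net $\{f_1,\dots,f_k\}\subset\Fa$: every $f\in\Fa$ lies within uniform distance $\varepsilon$ of some $f_i$, so $|f(x)-f_i(x)|\le\varepsilon$ and correspondingly $|m(f)-m(f_i)|\le\varepsilon$ for all $x$. Applying case (a) to the finite net produces, at accuracy $\varepsilon$, a point $x$ with $f_i(x)-m(f_i)\le\varepsilon$ for each $i$; the two uniform estimates then upgrade this to $f(x)-m(f)\le 3\varepsilon$ for \emph{every} $f\in\Fa$. Tracking the constants and taking $\varepsilon$ of order $2^{-n}$ yields the required sequence.

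The main obstacle I anticipate is the finite case (a), because it is the combinatorial heart on which (b) and (c) both rest. The subtlety is that pseudo convexity as stated in Definition \ref{def sion} is phrased only for convex combinations of \emph{two} points with a tolerance $2^{-n}$, whereas the barycentre $m$ may weight the finite family in an arbitrary way; one must iterate the two-point inequality to handle general finite convex combinations while keeping the accumulated tolerances summable, and then pass from ``deficiency small for each $n$ along a sequence'' to ``a single point good to within $2^{-n}$ for all $k$ functions simultaneously.'' Once the bookkeeping of tolerances in (a) is done cleanly, cases (b) and (c) follow by the finite-intersection and $\varepsilon$-net arguments sketched above with only routine estimates.
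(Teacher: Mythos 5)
Your overall architecture is the same as the paper's: reduce pseudo $B$-convexity to the statement that for each $\varepsilon>0$ the sets $\{x:f(x)-m(f)\le\varepsilon\}$, $f\in\Fa$, have nonempty intersection; settle the finite case first; get (b) from compactness plus closedness of these sets under lower semicontinuity; get (c) by passing to a finite $\varepsilon$-net and absorbing a $3\varepsilon$ loss. Parts (b) and (c) are essentially identical to the paper and are fine modulo (a). But there is a genuine gap in (a), and you have misdiagnosed where the difficulty lies. The element $m\in\Print X\Fa$ is an arbitrary finitely additive probability on the power set of $X$; it is \emph{not} given as, nor exactly equal to, a convex combination of point masses. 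Your phrase ``the convex combination realizing $m$ on the finite family'' presupposes precisely the fact that has to be proved, namely that for a finite family $\Fa$ and any $\varepsilon>0$ one can find points $x_1,\dots,x_k\in X$ and convex weights $\alpha_1,\dots,\alpha_k$ with $\bigl|\int f\,dm-\sum_{j}\alpha_j f(x_j)\bigr|<\varepsilon$ \emph{simultaneously} for all $f\in\Fa$. This is the displayed approximation \eqref{fact} in the paper, and it carries the analytic content of the proof: one first truncates to a set $B=\bigcap_{f}\{|f|<\eta\}$ on which all the (finitely many) integrands are uniformly bounded and whose complement contributes less than $\varepsilon/2$ to each integral, then takes a finite partition of $B$ on which each $f$ oscillates by less than $\varepsilon/2$, and sets $\alpha_j=m(B_j)/m(B)$ with $x_j\in B_j$ arbitrary. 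Without this step, pseudo convexity has nothing to act on: Definition \ref{def sion} only lets you merge a finite convex combination of \emph{points of $X$} into a single approximate point, so you must first exhibit $m(f)$ as such a combination up to $\varepsilon$.

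By contrast, the issue you flag as the ``combinatorial heart'' --- iterating the two-point inequality of Definition \ref{def sion} to general finite convex combinations while keeping tolerances summable --- is routine and is essentially disposed of by the remark following Definition \ref{def barycentre} (pseudo convexity is equivalent to every convex combination of point masses lying in $\Print[\beta]X\Fa$). Also, the parenthetical about ``uniform weights $1/k$'' confuses the two roles: the weights in question are attached to points of $X$ coming from the partition of $B$, not to the $k$ functions of the finite family. Once \eqref{fact} is in place, your (a) closes exactly as the paper's chain $\int f\,dm\ge-\varepsilon+\sum_j\alpha_jf(x_j)\ge-2\varepsilon+f(x_\varepsilon)$, and (b), (c) follow as you describe.
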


\begin{proof}
We start with a useful, general fact (see \cite{rao} 
for details): for any $\mu\in\Prob X$,
$\Ha\subset L^1(\mu)$ finite and $\varepsilon>0$
there exist $x_1,\ldots,x_k\in X$ and convex
weights $\alpha_1,\ldots,\alpha_k$ such that
\begin{equation}
\label{fact}
\sup_{h\in\Ha}\Babs{
\int hd\mu-\sum_{j=1}^kh(x_j)\alpha_j
}
	<
\varepsilon.
\end{equation}
This may be seen by choosing $\eta>0$ in such a
way that, letting 
$B=\bigcap_{h\in\Fa}\{\abs h<\eta\}$,
\begin{equation}
\sup_{h\in\Ha}
\Babs{
\int hd\mu-\tfrac1{\mu(B)}\int_B hd\mu
}
	<
\varepsilon/2.
\end{equation}
Then, since $\Ha$ is uniformly bounded on $B$, we 
construct a finite partition $B_1,\ldots,B_k$ of $B$ 
such that
\begin{equation}
\sup_{h\in\Ha}\sup_{1\le j\le k}\sup_{x,x'\in B_j}
\abs{h(x)-h(x')}
	<
\varepsilon/2.
\end{equation}
We obtain \eqref{fact} by selecting $x_j\in B_j$
arbitrarily and setting $\alpha_j=m(B_j)/m(B)$.

Consider now $m\in\Print X\Fa$ and $\varepsilon>0$
as given.
\tiref a
If $\Fa$ is finite, we obtain from \eqref{fact}
\begin{align}
\label{fi}
\int fdm
	\ge
-\varepsilon+\sum_{j=1}^kf(x_j)\alpha_j
	\ge
-2\varepsilon+f(x_\varepsilon)
\qquad
f\in\Fa
\end{align}
in which the existence of $x_\varepsilon$ follows from 
pseudo convexity of $\Fa$. 
\tiref b
If each $f\in\Fa$ 
is lower semicontinuous and $X$ is compact, the 
sets of the form $\{f-m(f)\le\varepsilon\}$, with 
$\in\Fa$, are compact and have the finite intersection 
property, by \tiref a. Thus, \tiref b follows. To prove 
\tiref c, cover $\Fa$ with a finite number of disks of 
radius $\varepsilon$ with respect to the metric of 
uniform distance and let 
$f_1,\ldots,f_n\in\Fa$
be their centres. Then,
$\bigcap_{i=1}^n\{f_i-m(f_i)\le\varepsilon\}
	\subset
\bigcap_{f\in\Fa}\{f-m(f)\le3\varepsilon\}$
and the claim follows again from the finite intersection
property.
\end{proof}

Although related to one another, pseudo $B$-convexity 
and compactness are independent properties.

\begin{lemma}
\label{lemma bar}
Let $\Fa\subset\Fun X$ be pseudo convex on $X$.
$m\in\Print[\beta] X\Fa$ if and only if $m\in\Print X\Fa$ 
and the set
\begin{equation}
\label{NFL}
\Ha(m)
=
\Bcco[X]{\bigcup_{f\in\Fa}\{h\in\Fun X:h\le f-m(f)\}}
\end{equation}
does not contain positive constant functions (with
$\cco[X]\cdot$ indicating the $X$-closed convex 
hull).
\end{lemma}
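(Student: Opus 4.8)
The plan is to translate membership in $\Print[\beta]X\Fa$ into the single scalar condition $\inf_{x\in X}\sup_{f\in\Fa}\big(f(x)-m(f)\big)\le 0$ and to relate this quantity to the positive constants contained in $\Ha(m)$. Throughout write $g_f=f-m(f)$ for $f\in\Fa$, so that $\Ha(m)$ is the $X$-closed convex hull of $\bigcup_f\{h\in\Fun X:h\le g_f\}$. I would first record two structural facts: each $g_f$ lies in $\Ha(m)$, and $\Ha(m)$ is downward closed (the convex hull of a union of downward closed sets is downward closed, and pointwise closure preserves this). For the easy implication, assume $m$ admits a pseudo sub barycentre $\seqn x$. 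Then $g_f(x_n)=f(x_n)-m(f)\le 2^{-n}$ for all $f$ and $n$; since every element of the convex hull is $\le\sum_i\lambda_i g_{f_i}$ and since $\{h:h(x_n)\le 2^{-n}\}$ is $X$-closed (a continuous evaluation condition), every $h\in\Ha(m)$ satisfies $h(x_n)\le 2^{-n}$. A positive constant $c$ would then force $c\le 2^{-n}$ for all $n$, which is absurd; hence $\Ha(m)$ contains no positive constant.

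For the converse I would argue by contraposition. The sequence condition of Definition \ref{def barycentre} is equivalent to the statement that for every $n$ there is a point with $\sup_f\big(f(x)-m(f)\big)\le 2^{-n}$; if no pseudo sub barycentre exists, some index $n_0$ fails, so with $\varepsilon=2^{-n_0}>0$ we have $\sup_f\big(f(x)-m(f)\big)>\varepsilon$ for every $x\in X$. The crucial step, and the place where pseudo convexity is used, is the claim
\[
\sup_{f\in\Fa}\big(\nu(f)-m(f)\big)\ge\varepsilon
\qquad\text{for every finitely supported }\nu\in\Prob X .
\]
Indeed, by the remark following Definition \ref{def barycentre}, pseudo convexity makes each such $\nu$ an element of $\Print[\beta]X\Fa$, so for every $\eta>0$ there is $y\in X$ with $f(y)\le\nu(f)+\eta$ for all $f$; then $\nu(f)-m(f)\ge\big(f(y)-m(f)\big)-\eta$ for each $f$, and taking suprema gives $\sup_f\big(\nu(f)-m(f)\big)\ge\sup_f\big(f(y)-m(f)\big)-\eta>\varepsilon-\eta$. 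Letting $\eta\downarrow0$ proves the claim.

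It then remains to deduce that the positive constant $\varepsilon$ itself lies in $\Ha(m)$, contradicting the hypothesis. Suppose not: since $\Ha(m)$ is $X$-closed and convex and $\{\varepsilon\}$ is compact, Hahn--Banach separation in the locally convex space $\Fun X$ produces an $X$-continuous linear functional separating them. Such functionals are exactly the finite combinations $\phi=\sum_{j=1}^k c_je_{x_j}$, and we may choose $\gamma$ with $\phi(h)\le\gamma<\phi(\varepsilon)$ for all $h\in\Ha(m)$. Because $\Ha(m)$ is downward closed, the fact that $\phi$ is bounded above on it forces $c_j\ge0$, and not all $c_j$ vanish; normalising, $\mu=\big(\sum_j c_j\big)^{-1}\sum_j c_je_{x_j}$ is a finitely supported probability. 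Evaluating the separating inequality at $g_f\in\Ha(m)$ and dividing by $\sum_j c_j>0$ yields $\mu(f)-m(f)=\phi(g_f)\big/\sum_j c_j\le\gamma\big/\sum_j c_j<\varepsilon$ for every $f$, whence $\sup_f\big(\mu(f)-m(f)\big)<\varepsilon$. This contradicts the displayed claim applied to $\nu=\mu$, so $\varepsilon\in\Ha(m)$ and the contrapositive is established.

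The main obstacle is this converse direction, namely turning the pointwise failure of a barycentre into a genuine constant inside a pointwise-closed convex hull. The two ingredients that make it work are the displayed claim, where pseudo convexity upgrades the pointwise bound at single points to a bound against every finitely supported measure, and the correct identification of the $X$-continuous dual of $\Fun X$ together with the sign constraint $c_j\ge0$ forced by downward-closedness; keeping the separation strict (via $\gamma$) is what produces the contradiction rather than a mere equality.
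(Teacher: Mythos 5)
Your proof is correct and follows essentially the same route as the paper: the forward direction evaluates elements of $\Ha(m)$ at the pseudo sub barycentre sequence, and the converse rests on Hahn--Banach separation in the $X$-topology, the identification of $X$-continuous functionals with finite combinations $\sum_j c_j e_{x_j}$, the sign constraint $c_j\ge0$ forced by the downward cone, and pseudo convexity to replace the resulting finitely supported probability by a single point. The only difference is organisational: you run the converse as a contraposition with an auxiliary claim about finitely supported measures, whereas the paper separates $\varepsilon\set X$ from $\Ha(m)$ directly for each $\varepsilon=2^{-n}$ and assembles the sequence; the substance is identical.
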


\begin{proof}
If $m$ admits a pseudo sub barycentre on $\Fa$, then 
\begin{align*}
\sup_{h\in\Ha(m)}\inf_xh(x)
	\le
\sup_{f\in\Fa}\inf_xf(x)-m(f)
	\le
0
\end{align*}
so that  $\Ha(m)$ contains no positive constants. 
Conversely, fix $\varepsilon>0$ and let $\phi$ be 
a $X$-continuous linear functional that separates 
$\{\varepsilon\set X\}$ from $\Ha(m)$. It is easily 
seen that $\phi$ admits the representation as
$\phi(h)=\sum_{i=1}^ka_ih(x_i)$ 
for given
$(a_1,x_1),\ldots,(a_k,x_k)\in\R\times X$. 
We conclude that
\begin{equation}
\label{sepp}
\sup_{f\in\Fa,\ b\in\Fun{X,\R_+}}\sum_{i=1}^ka_i[f(x_i)-m(f)-b(x_i)]
	<
\varepsilon\sum_{i=1}^ka_i.
\end{equation}
The fact that $\Fun{X,\R_+}$ is a convex cone implies
$a_i\ge0$ for $i=1,\ldots,k$, the strict inequality
in \eqref{sepp} requires $\sum_{i=1}^ka_i>0$.
Let $\alpha_i=a_i/\sum_{i=1}^ka_i$. If $\Fa$
is pseudo convex on $Y$ then there exists 
$x^m_\varepsilon\in X$ 
satisfying
$
\varepsilon
	>
\sum_{i=1}^k\alpha_if(x_i)-m(f)
	\ge
f(x^m_\varepsilon)-m(f)
$
for each $f\in\Fa$. 
\end{proof}

\section{Main Theorem}
\label{sec minimax}

The preceding properties deliver an elementary 
version of the minimax lemma. 

\begin{theorem}
\label{th minimax}
Let $\Fa\subset\Fun X$ be pointwise lower bounded
and pseudo concave on $X$. Then,
\begin{equation}
\label{mm int}
\inf_{m\in\Print \Fa{\Ev X}}\sup_{x\in X }
\int_\Fa f(x)m(df)
	=
\sup_{x\in X}\inf_{f\in\Fa}f(x)
\end{equation}
and the infimum over $\Print \Fa{\Ev X}$ is attained.
If, in addition, $\Ev X$ is pseudo $B$-convex on $\Fa$,
then
\begin{equation}
\label{mm bary}
\inf_{f\in\Fa}\sup_{x\in X }
f(x)
	=
\sup_{x\in X }\inf_{f\in\Fa}f(x).
\end{equation}
\end{theorem}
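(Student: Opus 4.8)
Write $v=\sup_{x\in X}\inf_{f\in\Fa}f(x)$ and $g(x)=\inf_{f\in\Fa}f(x)$, so that $g(x)>-\infty$ by Definition~\ref{def LB} and $v\in(-\infty,+\infty]$. If $v=+\infty$ both members of \eqref{mm int} equal $+\infty$ and the infimum is attained at any point mass, so I assume $v$ finite. For $m\in\Print\Fa{\Ev X}$ abbreviate $m(e_x)=\int_\Fa f(x)\,m(df)$ (as $x$ varies these are the elements of the integral hull $\intr(\Fa)$ of \eqref{minimax Z}). Since $f(x)\ge g(x)$ and $m$ is a probability, $m(e_x)\ge g(x)$, whence $\sup_x m(e_x)\ge v$ for every such $m$; thus both sides of \eqref{mm int} are $\ge v$. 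The plan is to produce a single $m^\ast\in\Print\Fa{\Ev X}$ with $m^\ast(e_x)\le v$ for all $x\in X$: this forces $\sup_x m^\ast(e_x)=v$, proving \eqref{mm int} with the infimum attained at $m^\ast$, and $m^\ast$ will also drive the deduction of \eqref{mm bary}.

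I would first solve the problem on finite sets. Fix $E=\{x_1,\dots,x_n\}\subset X$ and let $F(\Fa)=\{(f(x_1),\dots,f(x_n)):f\in\Fa\}\subset\R^n$. I claim $(v,\dots,v)\in\overline{\operatorname{conv}F(\Fa)+\R_+^n}$. Were this false, a separating functional would be given by some $\beta\ge0$ (non negativity because $\R_+^n$ lies in the recession cone), which after normalisation produces a probability vector $\alpha$ with $\inf_{f\in\Fa}\sum_i\alpha_i f(x_i)>v$. To contradict this I iterate the two point condition of Definition~\ref{def sion} to the convex combination $\sum_i\alpha_i\de{x_i}$, as recorded (in its concave form) after Definition~\ref{def barycentre}: this yields a pseudo super barycentre $(y_k)\subset X$ with $f(y_k)\ge\sum_i\alpha_i f(x_i)-2^{-k}$ for all $f\in\Fa$ and $k\in\N$. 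Since $\inf_f f(y_k)\le v$, we get $\inf_f\sum_i\alpha_i f(x_i)\le\inf_f f(y_k)+2^{-k}\le v+2^{-k}$ for every $k$, hence $\le v$, the desired contradiction. Consequently, for each $\varepsilon>0$ there is a finitely supported $m=\sum_j\lambda_j\de{f_j}\in\Prob\Fa$ with $m(e_{x_i})\le v+\varepsilon$ for $i=1,\dots,n$.

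The crux, which I expect to be the main obstacle, is to glue these finite solutions into one measure despite the integrands $e_x$ being unbounded and the class $\Print\Fa{\Ev X}$ carrying a built in integrability constraint. For $\varepsilon>0$ and finite $E\subset X$ put $C_E^\varepsilon=\{m\in\Prob\Fa:m(e_x)\le v+\varepsilon\text{ for all }x\in E\}$. The device is that $e_x-g(x)\ge0$, so $m\mapsto m(e_x)$ equals $\sup_N\big(g(x)+m((e_x-g(x))\wedge N)\big)$, a supremum of weak-$\ast$ continuous maps, hence is weak-$\ast$ lower semicontinuous; therefore each $C_E^\varepsilon$ is weak-$\ast$ closed and its members automatically satisfy $e_x\in L^1(m)$. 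By the previous step the $C_E^\varepsilon$ are non empty, and they enjoy the finite intersection property because $C_{E_1}^{\varepsilon_1}\cap\dots\cap C_{E_k}^{\varepsilon_k}\supset C_{E_1\cup\dots\cup E_k}^{\min_j\varepsilon_j}$. Since $\Prob\Fa$ is weak-$\ast$ compact, $\bigcap_{E,\varepsilon}C_E^\varepsilon\ne\emptyset$; any $m^\ast$ in it lies in $\Print\Fa{\Ev X}$ and satisfies $m^\ast(e_x)\le v$ for all $x\in X$, completing \eqref{mm int}.

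Finally, suppose $\Ev X$ is pseudo $B$-convex on $\Fa$. Applying \eqref{pseudo barycentre} to $m^\ast\in\Print\Fa{\Ev X}$ gives a pseudo sub barycentre $(f_n)\subset\Fa$, i.e. $f_n(x)\le m^\ast(e_x)+2^{-n}\le v+2^{-n}$ for all $x$ and $n$. Hence $\inf_{f\in\Fa}\sup_{x\in X}f(x)\le\inf_n\sup_{x}f_n(x)\le v$, while $\inf_f\sup_x f(x)\ge\sup_x\inf_f f(x)=v$ is automatic; this is \eqref{mm bary}.
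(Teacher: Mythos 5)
Your proof is correct and has the same overall architecture as the paper's: both arguments reduce everything to producing a single $m^\ast\in\Print\Fa{\Ev X}$ with $\int_\Fa f(x)\,m^\ast(df)\le\sup_{x}\inf_{f}f(x)$ for every $x\in X$, and both then derive \eqref{mm bary} verbatim from a pseudo sub barycentre of that $m^\ast$. Where you genuinely diverge is in how $m^\ast$ is obtained. The paper forms the convex cone $\mathcal K\subset\Fun\Fa$ spanned by $\{e_x-\eta:x\in X\}$, observes that its elements are lower bounded and that pseudo concavity forbids any $k\ge 1$ in $\mathcal K$, and then gets $m_0$ with $\sup_{k\in\mathcal K}\int k\,dm_0\le 0$ in one stroke from an external separation result, Proposition 1 of \cite{JMAA_2009}. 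You make this step self-contained: finite-dimensional separation of $(v,\dots,v)$ from $\overline{\operatorname{conv}F(\Fa)+\R_+^n}$ (the recession-cone argument forcing $\beta\ge 0$, and iterated pseudo concavity supplying the contradiction) solves the problem on each finite $E\subset X$, and a weak$^*$ compactness/finite-intersection argument glues the finite solutions together. The one delicate point in your gluing --- that the sets $C_E^\varepsilon$ are weak$^*$ closed and that membership in all of them forces $e_x\in L^1(m^\ast)$ even though $e_x$ may be unbounded --- is handled correctly: for $\phi\ge 0$ and a charge $m$ on the power set, $\sup_N\int(\phi\wedge N)\,dm<\infty$ does imply $\phi\in L^1(m)$ with integral equal to that supremum (the truncations are Cauchy in mean and converge to $\phi$ in $m$-measure, since $m(\phi>N)\le N^{-1}\int(\phi\wedge N)\,dm$), so $m\mapsto m(e_x)$ is indeed weak$^*$ lower semicontinuous. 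In effect you reprove the special case of the cited proposition that the theorem needs; what you gain is a proof that is elementary and free of that black box, what the paper gains is brevity and a cleaner conceptual formulation (no element of the cone dominates $1$) of the same separation step.
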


\begin{proof}
Write
$\eta
	=
\sup_{x\in X}\inf_{f\in\Fa}f(x)$,
for brevity. Observe that, for any $m\in\Print\Fa{\Ev X}$,
\begin{equation}
\label{intinf}
\sup_{x\in X}\int_\Fa f(x)m(df)
	\ge
\eta
\end{equation}
so that the left hand side is always the largest between 
the two terms in \eqref{mm int}. It is thus enough to 
show that the converse of \eqref{intinf} holds for some 
$m\in\Print\Fa{\Ev X}$, a fact which is non trivial
only in the case $\eta<+\infty$ to which we shall limit 
attention. Form the convex cone $\mathcal K\subset\Fun \Fa$ 
spanned by the set 
$\{e_x-\eta:x\in X\}$. All elements of $\mathcal K$ are 
lower bounded functions while, by the definition of 
$\eta$ and the fact that $\Fa$ is pseudo concave in $X$, 
$\mathcal K$ admits no element $k\ge1$. It follows 
from \cite[Proposition 1]{JMAA_2009} that there 
exists 
$m_0
	\in
\Print\Fa{\Ev X}$ 
such that 
$\sup_{k\in\mathcal K}\int_\Fa k(f)m_0(df)
	\le
0$
i.e., in view of \eqref{intinf}, such that
\begin{equation}
\label{sep}
\sup_{x\in X}\inf_{f\in\Fa}f(x)
	\ge
\sup_{x\in X}\int_\Fa f(x)m_0(df)
=
\inf_{m\in\Print\Fa{\Ev X}}\sup_{x\in X}\int_\Fa f(x)m(df).
\end{equation}
Assume in addition that $m_0\in\Print[\beta]\Fa{\Ev X}$ 
and let $\seqn f$ be its pseudo sub barycentre on $\Ev X$.
Then,
$
2^{-n}+
\int_\Fa f(x)m_0(df)
	\ge
f_n(x)
$
for each
$x\in X$ and $n\in\N$ and consequently
\begin{align*}
2^{-n}+\sup_{x\in X}\inf_{f\in\Fa}f(x)
	\ge
\sup_{x\in X}f_n(x)
	\ge
\inf_{f\in\Fa}\sup_{x\in X}f(x)
\end{align*}
which proves the second claim.
\end{proof}

A special case of Theorem 
\ref{th minimax}, treated by Sion \cite{sion}, 
is that of a function $H\in\Fun{X\times Y}$ whose 
$Y$-sections $H_y$ are lower semicontinuous and 
form a concave family on $X$ and whose 
$X$-sections $H_x$ are upper semicontinuous and 
form a convex family on $Y$ and $Y$ is a compact
space.

The equality \eqref{mm int} is established in Theorem
\ref{th minimax} under minimal assumptions if one 
accepts to replace $\Fa$ with its {\it integral hull} 
defined as
\begin{equation}
\label{intr}
\intr(\Fa)
	=
\left\{
\int_\Fa f(\cdot)m(df):m\in\Print \Fa{\Ev X}
\right\}.
\end{equation}
The importance of the integral hull was clearly
understood by Dynkin \cite{dynkin} (his definition
is slightly different) who refers to a set $\Fa$
such that $\Fa=\intr(\Fa)$ as a convex measurable 
space.

All properties of $\Fa$ involving shape -- such as 
positivity, monotonicity and the like -- are preserved 
in passing from $\Fa$ to $\intr(\Fa)$. Moreover, 
if $X$ is a Banach space and $\Fa$ the unit sphere 
of its dual space then $\intr(\Fa)=\Fa$. On the other 
hand, properties involving limits, such as continuity, 
do not carry over unless they hold uniformly in $\Fa$: 
e.g. if $\Fa$ is equicontinuous then so is $\intr(\Fa)$.
Thus, a solution of a given problem that may be found 
in $\intr(\Fa)$ rather than $\Fa$ may still be 
acceptable in several instances. 

In geometric terms, it is clear that $\intr(\Fa)$ is
a convex set containing $\Fa$. On the other hand, 
it follows from \eqref{fact} that for given 
$\varepsilon>0$ and $X_0\subset X$ finite there 
exist points $f_1,\ldots,f_k\in\Fa$ and convex 
weights $\alpha_1,\ldots,\alpha_k$ such that 
\begin{align*}
\sup_{x\in X_0}\Babs{
\int_\Fa f(x)m(df)-\sum_{j=1}^kf_j(x)\alpha_j}
	<
\varepsilon.
\end{align*}
Thus $\intr(\Fa)\subset\cco[X]\Fa$ (the converse
inclusion requires the additional assumptions of
Corollary \ref{cor hull} below) from which we 
conclude:
\begin{lemma}
If $\Fa\subset\Fun X$ is pointwise lower bounded
and pseudo concave on $X$ then,
\begin{equation}
\label{mm cco}
\inf_{h\in\cco[X]\Fa}
\sup_{x\in X }
h(x)
	=
\sup_{x\in X }
\inf_{h\in\cco[X]\Fa}
h(x)
\end{equation}
\end{lemma}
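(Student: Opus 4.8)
The plan is to show that both sides of \eqref{mm cco} coincide with the common value $\eta=\sup_{x\in X}\inf_{f\in\Fa}f(x)$ already analysed in Theorem \ref{th minimax}. Since the elementary inequality $\sup_{x}\inf_{h}h(x)\le\inf_{h}\sup_{x}h(x)$ holds for any family of functions, one direction of \eqref{mm cco}, namely $\inf_{h\in\cco[X]\Fa}\sup_{x}h(x)\ge\sup_{x}\inf_{h\in\cco[X]\Fa}h(x)$, is automatic. It therefore suffices to establish the two separate facts $\sup_{x}\inf_{h\in\cco[X]\Fa}h(x)=\eta$ and $\inf_{h\in\cco[X]\Fa}\sup_{x}h(x)\le\eta$.

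First I would compute the right hand side. For each fixed $x\in X$ I claim that $\inf_{h\in\cco[X]\Fa}h(x)=\inf_{f\in\Fa}f(x)$. Indeed, any finite convex combination $h=\sum_i\alpha_if_i$ satisfies $h(x)=\sum_i\alpha_if_i(x)\ge\inf_{f\in\Fa}f(x)$, being a weighted average; and since the evaluation $e_x$ is $X$-continuous, this lower bound survives the passage to $X$-limits, so that every $h\in\cco[X]\Fa$ obeys $h(x)\ge\inf_{f\in\Fa}f(x)$. As the inclusion $\Fa\subset\cco[X]\Fa$ gives the reverse inequality, the claim follows, and taking the supremum over $x$ yields $\sup_{x}\inf_{h\in\cco[X]\Fa}h(x)=\eta$. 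Here pointwise lower boundedness of $\Fa$ is what guarantees that all the quantities involved are real.

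Next I would bound the left hand side using the inclusion $\intr(\Fa)\subset\cco[X]\Fa$ derived from \eqref{fact} just above the statement. Since the infimum over a larger set is no larger,
\begin{equation*}
\inf_{h\in\cco[X]\Fa}\sup_{x\in X}h(x)
\le
\inf_{h\in\intr(\Fa)}\sup_{x\in X}h(x)
=
\inf_{m\in\Print\Fa{\Ev X}}\sup_{x\in X}\int_\Fa f(x)m(df)
=
\eta,
\end{equation*}
the last equality being precisely \eqref{mm int} of Theorem \ref{th minimax}. Combining this with the weak inequality noted above produces the chain $\eta=\sup_{x}\inf_{h\in\cco[X]\Fa}h(x)\le\inf_{h\in\cco[X]\Fa}\sup_{x}h(x)\le\eta$, which forces equality throughout and proves \eqref{mm cco}.

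The only delicate point is the stability of the pointwise lower bound $\inf_{f\in\Fa}f(\cdot)$ under the closure operation; this is where the $X$-topology, through continuity of the evaluations $e_x$, is actually used, and it is the sole step in which the closure, as opposed to the bare convex hull, plays a role. Everything else reduces to the already proven Theorem \ref{th minimax} together with the elementary inclusion $\intr(\Fa)\subset\cco[X]\Fa$, so I expect no serious obstacle beyond this routine verification.
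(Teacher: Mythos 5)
Your proof is correct and follows essentially the route the paper intends: the lemma is stated as an immediate consequence of the inclusion $\intr(\Fa)\subset\cco[X]\Fa$ together with \eqref{mm int} of Theorem \ref{th minimax}, which is exactly the chain $\eta=\sup_{x}\inf_{h\in\cco[X]\Fa}h(x)\le\inf_{h\in\cco[X]\Fa}\sup_{x}h(x)\le\inf_{h\in\intr(\Fa)}\sup_{x}h(x)=\eta$ you assemble. Your explicit verification that the pointwise lower bound $\inf_{f\in\Fa}f(x)$ survives passage to the $X$-closed convex hull is a detail the paper leaves implicit, and you supply it correctly.
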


We easily recover a local version of Theorem 
\ref{th minimax} similar to a result of Ha 
\cite[Theorem 4]{ha}.

\begin{corollary}
Let $\{\Fa_\alpha:\alpha\in\mathfrak A\}$ be a family
of subset of $\Fa\subset\Fun X$ each of which pointwise 
lower bounded and pseudo concave on $X$. Define
$\mathscr M_\alpha
	=
\{m\in\Print \Fa{\Ev X}:m(\Fa_\alpha^c)=0\}$
and
$\mathscr M
	=
\bigcup_{\alpha\in\mathfrak A}
\mathscr M$. 
Then,
\begin{equation}
\label{local minimax}
\inf_{\mu\in\mathscr M}
\sup_{x\in X}
\int_\Fa f(x)\mu(df)
	=
\inf_{\alpha\in\mathfrak A}
\sup_{x\in X}
\inf_{f\in\Fa_\alpha}
f(x)
\end{equation}
\end{corollary}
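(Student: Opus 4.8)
The plan is to reduce the Corollary to a family of instances of Theorem \ref{th minimax}, one for each $\alpha\in\mathfrak A$, and then take an infimum over $\alpha$. Let me sketch the structure.

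First I would fix $\alpha\in\mathfrak A$ and apply Theorem \ref{th minimax} to the family $\Fa_\alpha$, which by hypothesis is pointwise lower bounded and pseudo concave on $X$. This yields
\begin{equation*}
\inf_{m\in\Print{\Fa_\alpha}{\Ev X}}
\sup_{x\in X}
\int_{\Fa_\alpha} f(x)\,m(df)
	=
\sup_{x\in X}
\inf_{f\in\Fa_\alpha}
f(x),
\end{equation*}
with the infimum on the left attained. The main bridge to build is the identification of $\Print{\Fa_\alpha}{\Ev X}$ with the slice $\mathscr M_\alpha$ of $\Print\Fa{\Ev X}$. The point is that a probability $\mu\in\Print\Fa{\Ev X}$ with $\mu(\Fa_\alpha^c)=0$ is, after restriction, naturally a probability on $\Fa_\alpha$ integrating all the evaluations, and conversely any $m\in\Print{\Fa_\alpha}{\Ev X}$ extends to such a $\mu$ (e.g. by assigning mass $0$ off $\Fa_\alpha$); moreover the two give the same value $\int f(x)\,m(df)=\int f(x)\,\mu(df)$ because the integrand is supported on $\Fa_\alpha$. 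This correspondence lets me rewrite the left-hand side above as $\inf_{\mu\in\mathscr M_\alpha}\sup_{x\in X}\int_\Fa f(x)\,\mu(df)$.

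Next I would take the infimum over $\alpha\in\mathfrak A$ on both sides. On the right I get exactly $\inf_{\alpha\in\mathfrak A}\sup_{x\in X}\inf_{f\in\Fa_\alpha}f(x)$, which is the stated right-hand side of \eqref{local minimax}. On the left I must check that
\begin{equation*}
\inf_{\alpha\in\mathfrak A}\,
\inf_{\mu\in\mathscr M_\alpha}
\sup_{x\in X}
\int_\Fa f(x)\,\mu(df)
	=
\inf_{\mu\in\mathscr M}
\sup_{x\in X}
\int_\Fa f(x)\,\mu(df),
\end{equation*}
which is immediate from the definition $\mathscr M=\bigcup_{\alpha\in\mathfrak A}\mathscr M_\alpha$, since an infimum over a union equals the infimum of the infima over the pieces. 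Combining the two displays gives \eqref{local minimax}.

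The step I expect to be the main obstacle is the measure-theoretic identification of $\Print{\Fa_\alpha}{\Ev X}$ with $\mathscr M_\alpha$, and in particular being careful about the finitely additive setting: I must verify that restriction and trivial extension genuinely preserve membership in the relevant $\Print{}{}$ classes and leave the integrals $\int f(x)\,m(df)$ unchanged, rather than silently invoking a countably additive disintegration. A secondary subtlety is that Theorem \ref{th minimax} is applied separately for each $\alpha$ with its own attained minimizer, so after taking $\inf_\alpha$ the outer infimum need not itself be attained; I would therefore state \eqref{local minimax} as an equality of values only, without claiming attainment, which matches the form given.
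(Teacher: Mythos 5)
Your proposal is correct and follows essentially the same route as the paper: apply Theorem \ref{th minimax} to each $\Fa_\alpha$ separately, identify the infimum over $\Print{\Fa_\alpha}{\Ev X}$ with the infimum over $\mathscr M_\alpha$ via the extension/restriction correspondence (the paper states only the extension direction, which you correctly flag and complete), and then take the infimum over $\alpha$ using $\mathscr M=\bigcup_\alpha\mathscr M_\alpha$. Your additional care about the finitely additive setting and about non-attainment of the outer infimum is sound but does not change the argument.
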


\begin{proof}
Fix $\alpha\in\mathfrak A$. Given that any 
$m\in\Print{\Fa_\alpha}{\Ev X}$ extends to some 
$\mu\in\mathscr M_\alpha$, by Theorem \ref{th minimax}
we have
\begin{align*}
\sup_{x\in X}
\inf_{f\in\Fa_\alpha}
f(x)
	=
\inf_{m\in\Print{\Fa_\alpha}{\Ev X}}
\sup_{x\in X}
\int_{\Fa_\alpha} f(x)m(df)
	=
\inf_{\mu\in\mathscr M_\alpha}
\sup_{x\in X}
\int_\Fa f(x)m(df)
\end{align*}
from which we easily obtain \eqref{local minimax}.
\end{proof}

For each $\mu\in\mathscr M$ the set $\Fa$ is 
$\mu$-a.s. pointwise lower bounded and pseudo 
concave.

A useful generalization of Theorem \ref{th minimax}
permits to drop concavity upon passing to the 
free vector space generated by $X$ (see e.g. 
\cite[p. 137]{lang}). This may be represented 
as the space $\Fun[0]X$ of all real valued 
functions on $X$ with finite support. Associating 
each $x\in X$ with the function $\de x\in\Fun[0]X$ 
which is $1$ at $x$ and $0$ elsewhere, is an 
embedding of $X$ into $\Fun[0]X$. We also 
notice that $\Fun[0]X$ is (isomorphic to) the 
dual space of $\Fun X$ relatively to the 
$X$-topology via the identity
\begin{equation}
\label{embed}
\br f h
	=
\sum_{x\in X}f(x)h(x)
\qquad
f\in\Fun X,\ h\in\Fun[0]X.
\end{equation}
More precisely the $X$-topology on $\Fun X$ coincides
with the weak topology induced by $\Fun[0]X$ via
\eqref{embed}.


\begin{theorem}
\label{th minimax F}
Let $\Ha\subset\Fun[0]{X,\R_+}$ be a convex set and let
$\Fa\subset\Fun X$ be pointwise lower bounded. Then,
\begin{equation}
\label{minimax F}
\min_{F\in\intr(\Fa)}\sup_{h\in\Ha}
\br Fh
	=
\sup_{h\in\Ha}\inf_{f\in\Fa}
\br fh.
\end{equation}
If $\Fa$ is pointwise bounded, then \eqref{minimax F} 
remains true upon replacing $\Ha$ with any convex 
subset of $\Fun[0]X$.
\end{theorem}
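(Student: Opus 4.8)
The plan is to regard each $f\in\Fa$ as a function on the index set $\Ha$ through the pairing $h\mapsto\br fh$, and to reproduce the mechanism of Theorem \ref{th minimax} in this setting. Write $\eta=\sup_{h\in\Ha}\inf_{f\in\Fa}\br fh$; we may assume $\eta<+\infty$, the opposite case being trivial. First I would dispose of the inequality ``$\ge$''. For every $F=\int_\Fa f(\cdot)m(df)\in\intr(\Fa)$ and every $h$ one has $\br Fh=\int_\Fa\br fh\,m(df)\ge\inf_{f\in\Fa}\br fh$, while $\Fa\subset\intr(\Fa)$; hence $\inf_{F\in\intr(\Fa)}\br Fh=\inf_{f\in\Fa}\br fh$ for each $h$, and the elementary maximin inequality gives $\inf_{F\in\intr(\Fa)}\sup_{h\in\Ha}\br Fh\ge\sup_{h\in\Ha}\inf_{F}\br Fh=\eta$.

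For the reverse inequality the key observation is that the map $h\mapsto\br fh$ is \emph{linear} in $h$, so the family $\{h\mapsto\br fh:f\in\Fa\}\subset\Fun\Ha$ is automatically pseudo concave on $\Ha$; this is precisely the gain from passing to the free vector space $\Fun[0]X$, and it is why no concavity hypothesis on $\Fa$ is needed. When $\Ha\subset\Fun[0]{X,\R_+}$ this family is moreover pointwise lower bounded on $\Ha$, since $\br fh=\sum_x h(x)f(x)$ is a finite nonnegative combination of the values $f(x)$, each bounded below in $f$. The hypotheses underlying Theorem \ref{th minimax} are therefore in force, and I would run its proof here.

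Concretely I would form the convex cone $\mathcal K\subset\Fun\Fa$ generated by $\{\br\cdot h-\eta:h\in\Ha\}$ and invoke \cite[Proposition 1]{JMAA_2009}. Convexity of $\Ha$ is exactly what guarantees that $\mathcal K$ contains no function $\ge1$: a nonnegative combination $\sum_j\lambda_j(\br\cdot{h_j}-\eta)$ with total mass $\Lambda>0$ equals $\Lambda(\br\cdot{h^*}-\eta)$ for the convex combination $h^*=\sum_j(\lambda_j/\Lambda)h_j\in\Ha$, and $\br f{h^*}\ge\eta+1/\Lambda$ for all $f$ would contradict the definition of $\eta$. Nonnegativity of $h$ guarantees in turn that every element of $\mathcal K$ is bounded below, so Proposition 1 yields a finitely additive probability $m_0$ with $\int_\Fa(\br\cdot h-\eta)\,m_0(df)\le0$ for all $h\in\Ha$. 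The main obstacle is the final identification of the integral hull: to write $F_0=\int_\Fa f(\cdot)m_0(df)\in\intr(\Fa)$ one needs each evaluation $e_x$ to lie in $L^1(m_0)$. This is again delivered by the sign condition, for if $x$ lies in the support of some $h\in\Ha$ then the lower bounded function $e_x$ occurs with a positive coefficient in $\br\cdot h=\sum_y h(y)e_y$, whose integral is finite; and finiteness of the integral of a finite nonnegative combination of functions bounded below forces every summand to be integrable. (The value of $F_0$ at points lying in no such support never enters any pairing $\br{F_0}h$, so it may be assigned arbitrarily.) Then $\br{F_0}h=\int_\Fa\br fh\,m_0(df)\le\eta$ for each $h$, whence $\sup_{h\in\Ha}\br{F_0}h\le\eta$; together with the first paragraph this yields \eqref{minimax F} with the minimum attained at $F_0$.

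For the second assertion I would observe that pointwise boundedness of $\Fa$ makes each $e_x$ a bounded function on $\Fa$, hence $m$-integrable for every finitely additive probability $m$; consequently $\Print\Fa{\Ev X}=\Prob\Fa$ and the integrability obstacle evaporates. The same cone argument then goes through verbatim for an arbitrary convex $\Ha\subset\Fun[0]X$: the ``no element $\ge1$'' step used only convexity of $\Ha$, and the lower boundedness of the elements of $\mathcal K$, previously supplied by the sign of $h$, is now supplied by the boundedness of $\Fa$. I expect the only genuine subtlety throughout to be this interplay between the sign or boundedness assumptions and the integrability of the evaluations $e_x$; everything else reduces to the linearity of the pairing together with the already established Theorem \ref{th minimax}.
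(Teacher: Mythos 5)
Your argument is essentially the paper's: the paper observes that the functionals $h\mapsto\br fh$ form a pointwise lower bounded, concave (hence pseudo concave) family on the convex set $\Ha$ and invokes Theorem \ref{th minimax} directly, whereas you unfold that theorem's cone--separation proof inline, so the mathematical content coincides. The one place you go beyond the paper --- verifying that the optimal $m_0$ integrates every evaluation $e_x$ and not merely the functionals $\br\cdot h$, so that $F_0$ genuinely lies in $\intr(\Fa)$ --- is a real subtlety that the paper compresses into ``clearly an equivalent reformulation''; note, though, that for $x$ outside $\bigcup_{h\in\Ha}\mathrm{supp}(h)$ the definition of $\intr(\Fa)$ still demands a single $m$ integrating all of $\Ev X$, so ``assign the value arbitrarily'' does not by itself exhibit $F_0$ as an element of $\intr(\Fa)$ and this step deserves the same scrutiny you rightly applied to the supported points.
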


\begin{proof}
Under either assumption, $\Ha\subset\Fun[0]{X,\R_+}$ 
and $\Fa$ pointwise lower  bounded or $\Fa$ 
pointwise bounded, the collection of all functionals 
on $\Ha$ associated with some $f\in\Fa$ via 
\eqref{embed} is pointwise lower bounded. 
Moreover, the function $\br fh$ is concave on 
the convex set $\Ha$. We can then apply 
Theorem \ref{th minimax} and obtain 
\eqref{mm int}, of which \eqref{minimax F} is 
clearly an equivalent reformulation.
\end{proof}

\section{Strong domination properties}
\label{sec dom}

We may rewrite Theorem \ref{th minimax} to obtain 
a useful domination condition. It is convenient to 
adopt the symbol
\begin{equation}
\label{Delta}
\Delta(X)
	=
\{\delta\in\Fun[0]{X,\R_+}:\br{\set X}\delta=1\}.
\end{equation}

\begin{theorem}
\label{th dom}

Let $\Fa,\Ga\subset\Fun X$, with $\Fa$ pointwise 
upper bounded. The inequality
\begin{equation}
\label{balance}
\sum_{i=1}^n\br{g_i}{\delta_i}
	\le
\sup_{f\in\Fa}\Bbr f{\sum_{i=1}^n\delta_i}
\end{equation}
holds for every finite subset of $\Ga\times\Delta(X)$
if and only if there is $m\in\Print X{\Ev X}$
such that 
\begin{equation}
\label{dominate}
g(x)
	\le
\int_\Fa f(x)m(df),
\qquad
g\in\Ga,\ 
x\in X.
\end{equation}
\end{theorem}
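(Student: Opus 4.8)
The plan is to treat the two implications separately, the forward implication being a direct computation and the reverse one an application of the separation result that underlies Theorem \ref{th minimax}.

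First I would dispose of the easy direction. Assume \eqref{dominate} holds for some $m\in\Print\Fa{\Ev X}$ and fix a finite subset $(g_1,\delta_1),\ldots,(g_n,\delta_n)$ of $\Ga\times\Delta(X)$. Bounding each $g_i(x)$ by $\int_\Fa f(x)m(df)$ pointwise, exchanging the finite sum over the (finite) supports of the $\delta_i$ with the integral, and using linearity of the bracket in its second argument, one obtains
\[
\sum_{i=1}^n\br{g_i}{\delta_i}
	\le
\int_\Fa\Bbr f{\sum_{i=1}^n\delta_i}m(df)
	\le
\sup_{f\in\Fa}\Bbr f{\sum_{i=1}^n\delta_i},
\]
where the middle integral is well defined because $\Ev X\subset L^1(m)$ and the last step holds because $m$ is a probability. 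This is exactly \eqref{balance}.

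For the converse I would mimic the proof of Theorem \ref{th minimax}. Working in $\Fun\Fa$, form the convex cone $\mathcal K$ generated by the functions $f\mapsto g(x)-f(x)$, $g\in\Ga$, $x\in X$; equivalently, $\mathcal K$ is generated by the functions $f\mapsto\br g\delta-\br f\delta$ with $g\in\Ga$ and $\delta\in\Delta(X)$, since these are convex combinations of the former. Because $\Fa$ is pointwise upper bounded, each generator is bounded below on $\Fa$ (by $g(x)-\sup_{f\in\Fa}f(x)>-\infty$), hence so is every element of $\mathcal K$. The crux is to show that $\mathcal K$ contains no function dominating a positive constant. A generic element of $\mathcal K$ takes the form $f\mapsto\sum_i a_i\br{g_i}{\delta_i}-\br f{\sum_i a_i\delta_i}$ with $a_i\ge0$; were it $\ge c$ for some $c>0$ and every $f$, one would obtain $\sum_i a_i\br{g_i}{\delta_i}>\sup_{f\in\Fa}\br f{\sum_i a_i\delta_i}$, contradicting the weighted form of \eqref{balance}.

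The one point requiring attention is that \eqref{balance} is stated with unit coefficients while the cone produces arbitrary nonnegative weights $a_i$. I would bridge this gap by homogeneity: repeating each pair $(g_i,\delta_i)$ a suitable number of times upgrades \eqref{balance} to its version with nonnegative integer weights, clearing denominators extends it to rational weights, and a density argument then covers real weights. Granting this, $\mathcal K$ is a cone of lower bounded functions admitting no element $\ge1$, so \cite[Proposition 1]{JMAA_2009} produces $m\in\Print\Fa{\Ev X}$ with $\sup_{k\in\mathcal K}\int_\Fa k(f)\,m(df)\le0$. Applying this to the generators $k=g(x)-e_x$ yields $g(x)-\int_\Fa f(x)m(df)\le0$, which is \eqref{dominate}. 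The main obstacle is precisely this passage from unit to arbitrary weights, together with verifying the hypotheses of the cited separation proposition, namely the lower boundedness of the cone and the integrability of the evaluations $e_x$, both of which rest on the pointwise upper boundedness of $\Fa$.
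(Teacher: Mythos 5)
Your proof is correct, and both directions are in the right spirit, but the hard direction is implemented differently from the paper's. The paper does not apply the separation proposition directly: it introduces the maps $G(g,x)=g(x)$ and $(Tj)(g,x)=j(x)$ together with the convex test class $\Ha=\{h\in\Fun[0]{\Ga\times X}_+:h(g,\cdot)\in\Delta(X)\text{ for all }g\}$, rewrites \eqref{balance} as $\sup_{h\in\Ha}\inf_{f\in\Fa}\br{G-Tf}{h}\le0$, and then invokes Theorem \ref{th minimax} as a black box for the pointwise lower bounded family $\{G-Tf:f\in\Fa\}$. The payoff of that packaging is precisely the point you flag as the ``one point requiring attention'': since $\Ha$ is closed under convex combinations (a convex combination of functions assigning an element of $\Delta(X)$ to each $g$ is again such a function), the cone built inside the proof of Theorem \ref{th minimax} can always be renormalized back into $\Ha$, so only the unit-coefficient form of \eqref{balance} is ever used. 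Your cone generated by $f\mapsto g(x)-f(x)$ does not have this self-normalizing structure, so you genuinely need the weighted form $\sum_ia_i\br{g_i}{\delta_i}\le\sup_f\br{f}{\sum_ia_i\delta_i}$ for arbitrary $a_i\ge0$. Your bridge does work: integer weights come from reading \eqref{balance} over finite families rather than literal subsets (clearly the intended reading, as the Ky Fan corollary with weights $p_i$ shows), rationals follow by positive homogeneity of both sides, and for real weights the map $a\mapsto\sup_{f\in\Fa}\br{f}{\sum_ia_i\delta_i}$ is a finite convex function on the closed positive orthant (finiteness is exactly pointwise upper boundedness of $\Fa$), hence continuous on the open orthant, while points with some $a_i=0$ reduce to fewer indices; together with the continuity of the linear left-hand side this closes the density step. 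The remaining ingredients of your argument --- lower boundedness of the cone from $g(x)-f(x)\ge g(x)-\sup_{f\in\Fa}f(x)$, integrability of $e_x$ because $g(x)-e_x$ lies in the cone, and the extraction of \eqref{dominate} from $\int(g(x)-e_x)\,dm\le0$ --- match the way \cite[Proposition 1]{JMAA_2009} is used in the proof of Theorem \ref{th minimax}. In short: same engine, but you run it by hand and pay for it with the weight-extension lemma, whereas the paper's choice of test space makes that lemma unnecessary.
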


\begin{proof}
Define the maps $G\in\Fun{\Ga\times X}$ and 
$T\in\bFun{\Fun X,\Fun{\Ga\times X}}$
implicitly by letting
\begin{equation}
G(g,x)=g(x)
\qand
(Tj)(g,x)=j(x)
\qquad
g\in\Ga,\ x\in X,\ j\in\Fun X.
\end{equation}
Write 
$\Ha
	=
\big\{h\in\Fun[0]{\Ga\times X}_+:
h(g,\cdot)\in\Delta(X)\text{ for all }g\in\Ga\big\}$. Then, 
\eqref{balance} takes the form
\begin{equation}
\sup_{h\in\Ha}
\ \inf_{f\in\Fa}
\bbr{ G-Tf}{ h}
	\le
0.
\end{equation} 
while the family  
$\big\{G-T f:f\in\Fa\big\}
	\subset
\Fun{\Ga\times X}$ 
is pointwise lower bounded. By Theorem 
\ref{th minimax}, this implies the inequality
\begin{equation}
0
\ge
\sup_{h\in\Ha}
\Bbr{\int_\Fa\big(G-T f\big)(\cdot)
m(d f)}
{h}
	=
\sup_{h\in\Ha}
\Bbr{G-\int_\Fa(T f)(\cdot)
m(d f)}
{h}
\end{equation} 
for some  
$m
	\in
\Print\Fa{\Ev{ X}{\Fa}}$. 
From this we deduce
\begin{equation}
g(x)
	\le
\int_{\Fa}(T f)(g,x) m(d f)
	=
\int_{\Fa} f(x) m(d f)
\qquad
g\in\Ga,\ x\in X.
\end{equation}
The converse implication is obvious.
\end{proof}

Several useful and known results follow easily from 
Theorem \ref{th dom}, upon assuming linearity. One 
special case is the domination Theorem of Ky Fan 
\cite[Theorem 12, p. 123]{fan_56} in which $X$ is a 
Banach space, $g\in\Fun X$ and $\Fa=\rho S_{X^*}$ 
(the ball of radius $\rho>0$ in the dual space $X^*$). 
Then, $\intr(\Fa)=\Fa$ and $g$ is dominated by a 
continuous linear functional with norm $\le\rho$ if 
and only if
\begin{equation}
\sum_{i=1}^Np_ig(x_i)
	\le
\rho\Bnorm{\sum_{i=1}^Np_ix_i}
\qquad
p_1,\ldots,p_N\in\R_+,\ \sum_{i=1}^Np_i\le1.
\end{equation}
As is well known, Fan's Theorem has been widely used 
in game theory to prove that the value of a game has 
non empty core, see \cite{delbaen}, and the condition 
corresponding to \eqref{balance} is known in that
literature as {\it balancedness}.

For each index $\alpha$ in some non empty set 
$\mathfrak A$ let $X_\alpha$ be a set, 
$g_\alpha\in\Fun{X_\alpha}$ and let $\pi_\alpha$ 
be the projection of $X=\bigtimes_\alpha X_\alpha$ 
on its $\alpha$-th coordinate. Letting
$\Ga
	=
\{g_\alpha\circ\pi_\alpha:\alpha\in\mathfrak A\}$
Theorem \ref{th dom} provides a simple necessary 
and sufficient criterion for the existence of common 
extensions with given marginals. This problem was 
treated by Strassen \cite{strassen} in a well known 
paper for the case of measures while a recent 
characterization for linear functionals was 
obtained by Berti and Rigo \cite{berti_rigo_21} (but 
see also \cite[Lemma 2]{berti_rigo_2004}).
The present version does not require linearity.

A last implication is the characterisation of the integral 
hull of a pointwise bounded family of functions.

\begin{corollary}
\label{cor hull}
If $\Fa\subset\Fun X$ is pointwise bounded then
$\intr(\Fa)=\cco[X]\Fa$.
\end{corollary}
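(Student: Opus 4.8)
The inclusion $\intr(\Fa)\subset\cco[X]\Fa$ was already recorded just above the statement (it needs no boundedness), so the plan is to establish only the reverse inclusion $\cco[X]\Fa\subset\intr(\Fa)$. The first step is to pin down how pointwise boundedness enters. For each $x\in X$ the evaluation $e_x\colon f\mapsto f(x)$ is, by hypothesis, a \emph{bounded} function on $\Fa$, hence $e_x\in L^1(m)$ for every $m\in\Prob\Fa$. Consequently $\Print\Fa{\Ev X}=\Prob\Fa$, and $\intr(\Fa)$ is exactly the image of $\Prob\Fa$ under the map $\Phi\colon\Prob\Fa\to\Fun X$, $\Phi(m)=\int_\Fa f(\cdot)\,m(df)$. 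Since $\Phi(\delta_f)=f$, we have $\Fa\subset\intr(\Fa)$; as $\intr(\Fa)$ is convex, it contains the whole convex hull of $\Fa$.

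The crux is to show that $\intr(\Fa)$ is $X$-closed, which I would do by proving it is $X$-compact. Under the standard identification of finitely additive probabilities with states on the space $B(\Fa)$ of bounded functions on $\Fa$, the set $\Prob\Fa$ is a weak$^*$-closed subset of the unit ball of $ba(\Fa)=B(\Fa)^*$, hence weak$^*$-compact by Banach--Alaoglu. Because each $e_x$ lies in $B(\Fa)$, the coordinate functional $m\mapsto\int_\Fa f(x)\,m(df)=m(e_x)$ is weak$^*$-continuous, so $\Phi$ is continuous from the weak$^*$ topology on $\Prob\Fa$ to the $X$-topology (pointwise convergence) on $\Fun X$. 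The continuous image of a compact set is compact, and $\Fun X$ is Hausdorff in this topology, so $\intr(\Fa)=\Phi(\Prob\Fa)$ is $X$-compact and therefore $X$-closed.

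Combining the two steps finishes the argument: $\intr(\Fa)$ is a convex, $X$-closed set containing $\Fa$, hence containing $\cco[X]\Fa$; together with the already-known reverse inclusion this gives $\intr(\Fa)=\cco[X]\Fa$. I expect the compactness step to be the only genuine obstacle, and it is precisely where the hypothesis is spent: pointwise boundedness is exactly what places every $e_x$ in $B(\Fa)$ and thereby renders the coordinate maps weak$^*$-continuous. Without it the evaluations can leave $B(\Fa)$, the map $\Phi$ need no longer be continuous, and $\intr(\Fa)$ may fail to be $X$-closed, so that the identity $\intr(\Fa)=\cco[X]\Fa$ can break down.
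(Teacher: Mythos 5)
Your argument is correct, but it takes a genuinely different route from the paper. The paper proves the reverse inclusion by invoking its own Theorem \ref{th dom}: for $g\in\cco[X]\Fa$ it applies the domination criterion with $\Ga=\{g,-g\}$ and with $\Fa$ replaced by $\Fa\cup(-\Fa)$, checks the balancedness condition \eqref{balance}, and concludes that \eqref{dominate} holds with equality, i.e.\ that $g$ is an integral over $\Fa$. That keeps the corollary inside the minimax machinery and exhibits it as a direct consequence of the domination theorem. You instead bypass Theorem \ref{th dom} entirely: since pointwise boundedness makes every evaluation $e_x$ a bounded function on $\Fa$, you identify $\Print\Fa{\Ev X}$ with $\Prob\Fa$, realize the latter as a weak$^*$-compact set of states on the bounded functions over $\Fa$, and observe that the affine map $m\mapsto\int_\Fa f(\cdot)\,m(df)$ is continuous into the $X$-topology, so that $\intr(\Fa)$ is a convex $X$-compact (hence $X$-closed) set containing $\Fa$. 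Each step is sound, and your diagnosis of where the hypothesis is spent is accurate. What your approach buys is independence from the minimax apparatus together with the strictly stronger byproduct that $\intr(\Fa)$ is $X$-compact, not merely $X$-closed; what the paper's approach buys is economy within its own framework, since the corollary then serves as an illustration of Theorem \ref{th dom} rather than requiring a separate Banach--Alaoglu argument.
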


\begin{proof}
We already noticed that
$\intr(\Fa)\subset\cco[X]\Fa$. If $g\in\cco[X]\Fa$,
apply Theorem \ref{th dom} with $\Ga=\{g,-g\}$
and replacing $\Fa$ with $\Fa\cup(-\Fa)$. Then
\eqref{balance} is true and, as a consequence, 
\eqref{dominate} holds with equality.
\end{proof}

\section{Weak domination properties}
\label{sec weak}

The pointwise domination criterion \eqref{dominate} 
is made less restrictive if we focus on the behaviour 
of functions in approaching (or attaining) zero. The 
characterization of this weaker criterion involves an 
appropriate decomposition of $X$.

If $\Fa\subset\Fun X$, define the set function
(recall \eqref{Delta})
\begin{equation}
I_\Fa(U)
	=
\inf_{h\in\Delta(U)}
\sup_{f\in\Fa}
\bbr{\abs f\wedge 1}h
\qquad
U\subset X.
\end{equation}
This function is instrumental to the following

\begin{definition}
\label{def cover}
Let $\kappa$ be a cardinal number and $\Fa\subset\Fun X$. 
We speak of a family 
$\{X_\alpha:\alpha\in\mathfrak A\}$
of subsets of $X$ as a $\kappa$-exhaustion
induced by $\Fa$ if:
(i)
$\mathfrak A$ has cardinality $\le\kappa$,
(ii)
$I_\Fa(X_\alpha)>0$ for every $\alpha\in\mathfrak A$
and 
(iii)
$\sup_{(x,f)\in X_0\times\Fa}\abs{f(x)}
	=
0$,
where
$X_0
	=
X\setminus\bigcup_{\alpha\in\mathfrak A}X_\alpha$.
A $\kappa$-exhaustion is residual if condition
(iii) is replaced with
(iii') 
$\lim_df(x_d)=0$ for each net $\net xdD$
which is eventually in $\bigcap_{i=1}^nX_{\alpha_i}^c$
for every $\alpha_1,\ldots,\alpha_n\in\mathfrak A$.
\end{definition}

\begin{theorem}
\label{th kelley}
Let $\kappa$ be an infinite cardinal number and
$\Fa\subset\F(X,[0,1])$. Then,
\begin{enumerate}[(a).]
\item
$\Fa$  induces a $\kappa$-exhaustion
of $X$ if and only if there exists 
$\Ga
	\subset
\intr(\Fa)$
of cardinality $\le\kappa$ and such that
\begin{equation}
\label{k kelley}
\sup_{g\in\Ga} g(x)=0
\qtext{implies}
\sup_{f\in\Fa} f(x)=0
\qquad
x\in X;
\end{equation}
\item
$\Fa$ induces a residual, $\kappa$-exhaustion of $X$
if and only if there exists 
$\Ga
	\subset
\intr(\Fa)$
of cardinality $\le\kappa$ and such that for every
net $\net xdD$ in $X$
\begin{equation}
\label{a kelley s}
\sup_{g\in\Ga}\lim_dg(x_d)
	=
0
\qtext{implies}
\sup_{f\in\Fa}\lim_df(x_d)
	=
0.
\end{equation}
\end{enumerate}
\end{theorem}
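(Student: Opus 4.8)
My plan is to reduce the whole proof to a single equivalence, which I would isolate and prove first: for every $U\subset X$, the inequality $I_\Fa(U)>0$ holds if and only if there is $g\in\intr(\Fa)$ with $\inf_{x\in U}g(x)>0$, and moreover one can always arrange $\inf_{x\in U}g(x)\ge I_\Fa(U)$. Granting this, conditions (ii) and (iii)/(iii') of Definition \ref{def cover} become statements about the existence of lower-bounded members of $\intr(\Fa)$, and both parts of the theorem turn into pure bookkeeping. Thus the only analytic ingredient is this equivalence, which I would extract from the domination Theorem \ref{th dom} (itself a consequence of the minimax lemma of Section \ref{sec minimax}).

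To prove the equivalence I would fix $U$ and $c>0$ and apply Theorem \ref{th dom} over the base set $U$, taking $\Ga=\{c\set U\}$ (the constant function $c$) and $\Fa$ restricted to $U$; the latter is pointwise bounded because $\Fa\subset\F(X,[0,1])$. For $\delta_1,\dots,\delta_n\in\Delta(U)$ one has $\br{c\set U}{\delta_i}=c$, and writing $\delta=n^{-1}\sum_i\delta_i\in\Delta(U)$ gives $\sup_{f\in\Fa}\Bbr f{\sum_i\delta_i}=n\sup_{f\in\Fa}\br f\delta$. Hence the balance inequality \eqref{balance} holds over all finite subsets of $\Ga\times\Delta(U)$ exactly when $c\le\sup_{f\in\Fa}\br f\delta$ for every $\delta\in\Delta(U)$, i.e. when $c\le I_\Fa(U)$. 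By Theorem \ref{th dom} this is equivalent to the existence of $m\in\Print\Fa{\Ev U}$ with $c\le\int_\Fa f(x)m(df)$ for all $x\in U$; since every evaluation is bounded, $g(x)=\int_\Fa f(x)m(df)$ defines an element of $\intr(\Fa)$ on all of $X$ with $\inf_{x\in U}g(x)\ge c$. Both directions, including the sharp bound $\inf_U g\ge I_\Fa(U)$, fall out by taking $c=I_\Fa(U)$.

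For part (a), if $\Fa$ induces a $\kappa$-exhaustion $\{X_\alpha\}$ I would pick $g_\alpha\in\intr(\Fa)$ with $c_\alpha:=\inf_{X_\alpha}g_\alpha>0$ (possible by (ii) and the equivalence) and set $\Ga=\{g_\alpha:\alpha\in\mathfrak A\}$. If $\sup_{g\in\Ga}g(x)=0$ then, as each $g_\alpha\ge 0$, $g_\alpha(x)=0<c_\alpha$ for all $\alpha$, so $x\notin X_\alpha$ for every $\alpha$; thus $x\in X_0$ and (iii) forces $f(x)=0$ for all $f\in\Fa$, which is \eqref{k kelley}. Conversely, given $\Ga$ obeying \eqref{k kelley}, I set $X_{g,n}=\{x:g(x)\ge 1/n\}$ for $g\in\Ga,\ n\in\N$; then $\inf_{X_{g,n}}g\ge 1/n$, so $I_\Fa(X_{g,n})>0$ by the equivalence, giving (ii), while $\Ga\times\N$ has cardinality $\le\kappa$ since $\kappa$ is infinite. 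As $x\notin\bigcup_{g,n}X_{g,n}$ is the same as $\sup_{g\in\Ga}g(x)=0$, \eqref{k kelley} yields $f(x)=0$ there, which is (iii).

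Part (b) runs along identical lines with the net versions. Forward: a net $x_d$ with $\sup_{g\in\Ga}\lim_dg(x_d)=0$ satisfies $\lim_dg_\alpha(x_d)=0<c_\alpha$, hence is eventually in $X_\alpha^c$ for each $\alpha$ and so eventually in every finite intersection $\bigcap_{i=1}^nX_{\alpha_i}^c$; then (iii') gives $\lim_df(x_d)=0$ for all $f$, which is \eqref{a kelley s}. Conversely, with $X_{g,n}$ as above, a net eventually in each $X_{g,n}^c$ has $\limsup_dg(x_d)\le 1/n$ for all $n$, hence $\lim_dg(x_d)=0$ (as $g\ge 0$), so $\sup_{g\in\Ga}\lim_dg(x_d)=0$ and \eqref{a kelley s} delivers (iii'). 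I expect the main friction here, rather than in the minimax step, to be purely interpretive: the limits in \eqref{a kelley s} need not exist a priori, so I would read $\lim_d$ as $\limsup_d$ throughout (the estimates above still produce genuine limits equal to $0$ for the members of $\Ga$), and I would lean on the finite-intersection phrasing of (iii') precisely to push the net eventually below all the finitely many thresholds $c_{\alpha_i}$ at once. Everything else is the cardinality-and-complement bookkeeping recorded above.
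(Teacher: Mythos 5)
Your proposal is correct and follows essentially the same route as the paper: the paper likewise isolates the identity $I_\Fa(U)=\sup_{F\in\intr(\Fa)}\inf_{x\in U}F(x)$ (its equation \eqref{bu}, obtained from Theorem \ref{th minimax F} rather than from Theorem \ref{th dom}, though both are the same minimax fact) and then performs exactly your bookkeeping, with the sets $X_\alpha=\{g_\alpha>1/p_\alpha\}$ in the converse direction. Your variant even yields the slightly sharper attained bound $\inf_Ug\ge I_\Fa(U)$ where the paper settles for $I_\Fa(X_\alpha)/2$, and your reading of $\lim_d$ as $\limsup_d$ is the right way to make the statement of \eqref{a kelley s} precise.
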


\begin{proof}
Assume that $\{X_\alpha:\alpha\in\mathfrak A\}$ 
is a $\kappa$-exhaustion of $X$ induced by $\Fa$ 
and fix $\alpha\in\mathfrak A$. It follows from 
Theorem \ref{th minimax F} that
\begin{equation}
\label{bu}
I_\Fa(X_\alpha)
	=
\sup_{F\in\intr(\Fa)}
\inf_{\gamma\in\Delta(X_\alpha)}
\br F h
	=
\sup_{F\in\intr(\Fa)}
\inf_{x\in X_\alpha}
F(x).
\end{equation}
From the assumption we infer the existence of 
$g_\alpha\in\intr(\Fa)$ such that 
$\inf_{x\in X_\alpha}g_\alpha(x)
	\ge
I_\Fa(X_\alpha)/2
	>
0$. Let 
$\Ga
	=
\{g_\alpha:\alpha\in\mathfrak A\}$.
The set $\Ga$ has cardinality $\le\kappa$;
moreover, if 
$\sup_{\alpha\in\mathfrak A}g_\alpha(x)
	=
0$
then necessarily $x\in X_0$ and so
$\sup_{f\in\Fa}f(x)
	=
0$,
as in \eqref{k kelley}. If, on the other hand, $\net xdD$ 
is a net such that $\lim_dg(x_d)=0$ for all $g\in\Ga$ 
then, no matter the choice of
$\alpha_1,\ldots,\alpha_k\in\mathfrak A$,
we must have $x_d\in\bigcap_{i=1}^kX_{\alpha_i}^c$
for all $d\in D$ sufficiently large and, if the exhaustion 
is residual, $\lim_df(x_d)=0$ for all $f\in\Fa$, as in 
\eqref{a kelley s}. This proves the direct implication 
of both claims, \tiref a and \tiref b.

Assume conversely that $\Ga\subset\intr(\Fa)$ has
cardinality $\le\kappa$ and satisfies \eqref{k kelley}, 
e.g. when \eqref{a kelley s} hods true. Let 
$\mathfrak A=\Ga\times\N$ and write each
element of $\mathfrak A$ as 
$\alpha=(g_\alpha,p_\alpha)$.
Define the sets
\begin{equation}
\label{exh}
X_\alpha
	=
\left\{x\in X:g_\alpha(x)>1/p_\alpha\right\}
\qquad
\alpha\in\mathfrak A
\end{equation} 
and
$X_0
	=
X\setminus\bigcup_{\alpha\in\mathfrak A} X_\alpha$.
Since $\kappa$ is infinite, the cardinality of $\mathfrak A$ 
does not exceed $\kappa$ and for each 
$\alpha\in\mathfrak A$ we have
\begin{equation}
I_\Fa(X_\alpha)
	=
\inf_{h\in\Delta(X_\alpha)}\sup_{f\in\Fa}\br fh
	\ge
\inf_{h\in\Delta(X_\alpha)}\br{g_\alpha}h
	\ge
1/p_\alpha.
\end{equation}
Moreover, if $x\in X_0$ we deduce that 
$\sup_\alpha g_\alpha(x)=0$
so that 
$\sup_{f\in\Fa}f(x)=0$,
by \eqref{k kelley}. Thus $\{X_\alpha:\alpha\in\mathfrak A\}$
is a $\kappa$-exhaustion. Eventually, let $\net xdD$ be a 
net in $X$ and fix $g\in\Ga$ arbitrarily.
Let $\alpha_i(g)=(g,1/i)\in\mathfrak A$ for $i=1,2,\ldots$. 
If the net is eventually in $\bigcap_{i=1}^nX_{\alpha_i(g)}^c$ 
for each $n\in\N$ then we deduce that $\lim_dg(x_d)=0$. 
Thus if $\net xdD$ is eventually in any intersection
$\bigcap_{i=1}^nX_{\alpha_i}^c$ this implies, by 
\eqref{a kelley s}, that $\lim_df(x_d)=0$ for all $f\in\Fa$. 
In other words, the $\kappa$-exhaustion is residual. 
This proves the converse implication for both claims.
\end{proof}
%
%
%

In the special case in which $\kappa=\aleph_0$
Theorem \ref{th kelley} simplifies considerably as
the collection $\Ga$ may be replaced, with no loss 
of generality, with a $\sigma$ convex combination 
of its elements, which is still an element of $\intr(\Fa)$.
This conclusion may be applied in the context of the 
following examples.

\begin{example}
\label{ex banach}
If $E$ is a Banach lattice, $\Fa=S_{E^*}\cap E^*_+$
(so that $\Fa=\intr\Fa$) and $X=S_E\cap E_+$ then, 
by Theorem \ref{th kelley} , $\Fa$ induces a countable 
exhaustion of $X$ if and only if there exists a strictly 
positive linear functional on $E$.
\end{example}

\begin{example}
\label{ex kelley}
Let $\Fa$ be a family of capacities on a Boolean 
algebra $X$ (i.e. each $f\in\Fa$ is an increasing 
function with values in $[0,1]$ and such that 
$f(0)=0$ and $f(1)=1$). Then $\intr(\Fa)$ consists 
of capacities as well. If $\Fa$ induces a countable 
exhaustion of $X$, this is equivalent, by Theorem 
\ref{th kelley}, to the existence of a capacity $\nu$ 
such that $\nu(x)=0$ if and only in $f(x)=0$ for all 
$f\in\Fa$. 
\end{example}

The exhaustion technique exploited above was inspired
by the approach of Kelley \cite[Theorem 4]{kelley} 
to the so-called Maharam problem for additive set 
functions on a Boolean algebra. Kelley's proof, based on 
the intersection number, has been extended by Galvin 
and Prikry \cite{galvin_prikry} and, more recently, by 
Balcar et al. \cite{balcar_jech_pazak}.

\section{Integral Representation Theorems}
\label{sec choquet}

Theorem \ref{th dom} implies a partial extension 
of the integral representations of Choquet 
\cite[Th\'eor\`eme 1]{choquet_56} and of Strassen
\cite[Theorem 1]{strassen}. For the former we make 
use of the concept of sufficient subset, a special case 
of which is the notion of boundary in the theory of 
Banach spaces, see e.g. \cite[Definition 1.1]{godefroy}. 

\begin{definition}
\label{def suff}
A subset $Z\subset X$ is sufficient for $X$ relatively 
to the family $\Ha\subset\Fun X$, in symbols $Z\geH X$, 
if the following is true:
\begin{equation}
\label{extreme}
\sum_{h\in\Ha}\delta(h)h(x)
	\le
\sup_{z\in Z}\sum_{h\in\Ha}\delta(h)h(z)
\qquad
x\in X,\ 
\delta\in\Delta(\Ha).
\end{equation}
\end{definition}

We designate with the symbol $\tau(\Ha)$ the initial 
topology induced by $\Ha$ on $X$. If $\Ha$ consists 
of bounded functions, it also induces a topology on 
the space $ba(X)$ (considered as the dual space of 
the set of bounded functions), necessarily weaker 
than the corresponding weak$^*$ topology. We 
denote the latter topology by $w^*(\Ha)$.

\begin{theorem}
\label{th choquet}
Let $\Ha\subset\Fun X$ consist of bounded functions.
Let $\mathscr Z$ be a family of subsets of $X$, linearly 
ordered by inclusion. Then $Z\geH X$ for every 
$Z\in\mathscr Z$ if and only if for each $x\in X$ 
there exists $m_x\in\Print X\Ha$ such that
$m_x(Z^c)=0$ for all $Z\in\mathscr Z$ and
\begin{equation}
\label{choquet}
h(x)
	\le
\int h(y)m_x(dy),
\qquad
h\in\Ha.
\end{equation}
\end{theorem}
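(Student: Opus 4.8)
The plan is to read the sufficiency relation $Z\geH X$ as a balancedness hypothesis, feed it into Theorem \ref{th dom}, and then glue the resulting measures across the chain $\mathscr Z$ by a compactness argument.

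First I would dispose of the easy \emph{if} direction. Suppose the representing measures $m_x$ exist. Fix $Z\in\mathscr Z$, $x\in X$ and $\delta\in\Delta(\Ha)$, and set $\phi=\sum_{h\in\Ha}\delta(h)h$. Since $m_x(Z^c)=0$, so that $m_x$ is carried by $Z$,
\[
\sum_{h\in\Ha}\delta(h)h(x)=\phi(x)\le\int_X\phi\,dm_x=\int_Z\phi\,dm_x\le\sup_{z\in Z}\phi(z)=\sup_{z\in Z}\sum_{h\in\Ha}\delta(h)h(z),
\]
which is exactly \eqref{extreme}, i.e. $Z\geH X$.

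For the \emph{only if} direction the key step is a change of viewpoint: I would treat $\Ha$ as the underlying set and each point $z$ as the evaluation $e_z\in\Fun\Ha$. Fix $x\in X$ and $Z\in\mathscr Z$. The family $\Ev Z=\{e_z:z\in Z\}\subset\Fun\Ha$ is pointwise upper bounded on $\Ha$, because $\sup_{z\in Z}e_z(h)=\sup_{z\in Z}h(z)<\infty$ for each bounded $h\in\Ha$. Applying Theorem \ref{th dom} on the ground set $\Ha$, with $\Ga=\{e_x\}$ the single evaluation at $x$ and $\Ev Z$ in the role of the dominating family, the hypothesis \eqref{balance} for a general finite subset of $\Ga\times\Delta(\Ha)$ reduces, on normalizing (since $\Ga$ is a singleton), to the single-$\delta$ inequality
\[
\sum_{h\in\Ha}\delta(h)h(x)\le\sup_{z\in Z}\sum_{h\in\Ha}\delta(h)h(z),\qquad\delta\in\Delta(\Ha),
\]
which is precisely $Z\geH X$ at the point $x$. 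Theorem \ref{th dom} then furnishes a probability on $\Ev Z$, equivalently a finitely additive probability $m$ carried by $Z$, with $h(x)\le\int_Z h\,dm$ for every $h\in\Ha$; extending it by $m(Z^c)=0$ gives a member of $\Print X\Ha$.

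Finally I would glue these measures along the chain, and this simultaneous-support step is the only genuinely delicate point. For fixed $x$ let
\[
K_Z=\Big\{m\in\Prob X:\ m(Z^c)=0\text{ and }\int h\,dm\ge h(x)\ \text{for all }h\in\Ha\Big\},
\]
a subset of the set $\Prob X$ of finitely additive probabilities on the power set of $X$. Each $K_Z$ is nonempty by the previous step, and it is closed in the weak$^*$ topology that makes $\Prob X$ compact (as a weak$^*$-closed set of means on the bounded functions on $X$), since both $m\mapsto m(Z^c)$, tested against the indicator of $Z^c$, and $m\mapsto\int h\,dm$ are weak$^*$ continuous. The family $\{K_Z:Z\in\mathscr Z\}$ has the finite intersection property: among finitely many of these sets the one indexed by the smallest $Z$ (which exists, as $\mathscr Z$ is a chain) is contained in all the others, because $Z\subset Z'$ forces $(Z')^c\subset Z^c$, whence $m(Z^c)=0$ implies $m((Z')^c)=0$. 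By compactness $\bigcap_{Z\in\mathscr Z}K_Z\ne\emptyset$, and any $m_x$ in this intersection is the desired representing measure. It is this last step that forces the passage to finitely additive probabilities on the full power set and the use of the full weak$^*$ topology rather than $w^*(\Ha)$ alone, since the support conditions $m(Z^c)=0$ must be tested against indicator functions that need not be $\tau(\Ha)$-continuous.
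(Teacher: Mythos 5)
Your proof follows the same route as the paper's: both directions are handled identically, namely by applying Theorem \ref{th dom} with the roles reversed ($\Ha$ as the ground set, $\Ga=\{e_x\}$, and $\Ev Z$ as the dominating family) and then intersecting the resulting nested sets of representing measures along the chain $\mathscr Z$ by compactness and the finite intersection property. The only divergence is that you run the compactness step in the full weak$^*$ topology of $\Prob X$ rather than the paper's $w^*(\Ha)$, which is in fact the more careful choice, since the constraint $m(Z^c)=0$ is tested against the indicator of $Z^c$ and need not define a $w^*(\Ha)$-closed set.
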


\begin{proof}
Fix $x\in X$ and $Z\in\mathscr Z$ and apply Theorem 
\ref{th dom} upon replacing $X$ with $\Ha$, $\Ga$ 
with $\{e_x\}$ and $\Fa$ with $\Ev Z$. We obtain 
$m'_x\in\Print Z\Ha$ satisfying \eqref{choquet}. 
Define $m_x(E)=m'_x(E\cap Z)$ for all $E\subset X$.
The set 
\begin{equation}
\label{Rx}
\Ring_x(\Ha;Z)
=
\{m\in\Prob X:m(Z^c)=0
\text{ and }
m\text{ satisfies }\eqref{choquet}\}
\end{equation}
is thus a non empty, $w^*(\Ha)$ compact set. Moreover, 
for given $Z,Z'\in\mathscr Z$ the inclusion $Z\subset Z'$ 
implies 
$\Ring_x(\Ha;Z)
	\subset
\Ring_x(\Ha;Z')$. 
But then it is enough to select for each $x\in X$ an 
element of
$\bigcap_{Z\in\mathscr Z}\Ring_x(\Ha;Z)$.
The converse implication is obvious.
\end{proof}

In other words each $x\in X$ is the sub barycentre
of some element of $\Print X\Ha$ which represents
it. If $Z$ is the set of extreme points of a convex, compact subset of $X$ of a locally 
convex space $E$ and $\Ha$ the dual of $E$ (as in the 
original formulation of Choquet \cite{choquet_56}) it 
is then obvious that $Z\geH X$ and that the measure 
$m_x$ which represents $x$ relatively to $\Ha$ may 
be chosen to be a countably additive, regular Borel 
measure. The latter property may conflict with the 
condition $m_x(Z^c)=0$ if $Z$ is not a Borel set, as 
noted by Bishop and De Leeuw \cite{bishop_deleeuw}. 
Theorem \ref{th choquet} requires instead no special 
assumption on $X$, $Z$ or $\Fa$ and raises no 
measurability issue. At the same time it does not 
permit a deeper characterization of representing 
measures through additional properties. 

It is obvious from the definition that for a symmetric
family $\Ha$ of bounded functions, a net converges 
uniformly on $X$ if and only if it converges uniformly 
on any of its sufficient subsets. This suggests that 
sufficient subsets may induce a topological characterization
of $\Ha$. Indeed several authors have exploited Choquet 
integral representation to deduce compactness criteria,
including Rainwater \cite[p. 25]{phelps}, Godefroy
\cite[Theorem I.2]{godefroy}, Bourgain and Talagrand 
\cite[Th\'eor\`eme 1]{bourgain_talagrand} and, 
recently, Pfitzner \cite{pfitzner}. All of these results
assume linearity of $X$ and of $\Ha$. In our general 
setting similar conclusions may be reached but under 
additional assumptions on the set of representing
measures
$\mathscr R(\Ha;Z)
=
\bigcup_{x\in X}\mathscr R_x(\Ha;Z)$ .

\begin{corollary}
\label{cor choquet}
Let $\Ha\subset\Fun X$ be a symmetric set of
bounded functions that separate the points of 
$X$ and let $Z\geH X$ be $\tau(\Ha)$ closed.
Assume the existence of a $w^*(\Ha)$ closed 
subset
$\mathcal R
	\subset
\mathscr R(\Ha;Z)$
such that
\begin{equation}
\mathcal R
\cap
\mathscr R_x(\Ha;Z)\ne\emp
\qquad
x\in X.
\end{equation}
Then, a sequence in $\Ha$ converges pointwise on 
$X$ if and only if it converges pointwise on $Z$.
\end{corollary}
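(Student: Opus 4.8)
The plan is to prove the nontrivial implication, that pointwise convergence on $Z$ forces pointwise convergence on all of $X$; the reverse is immediate since $Z\subset X$. So I assume that $(h_n)\subset\Ha$ converges pointwise on $Z$ to some $h$, fix $x\in X$, and use the hypothesis to choose a representing measure $m_x\in\mathcal R\cap\mathscr R_x(\Ha;Z)$ from \eqref{Rx}. Because $\Ha$ is symmetric, \eqref{choquet} applied to both $h'$ and $-h'$ upgrades to the exact identity $h'(x)=\int h'\,dm_x$ for every $h'\in\Ha$; in particular $h_n(x)=\int h_n\,dm_x$ for all $n$. The whole question thus reduces to showing that $\int h_n\,dm$ converges for every $m\in\mathcal R$. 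Since each map $m\mapsto\int h_n\,dm$ is $w^*(\Ha)$ continuous, while $\mathcal R$, being a $w^*(\Ha)$ closed subset of the $w^*$ compact set $\Prob X$, is itself $w^*$ (hence $w^*(\Ha)$) compact, I have at my disposal a compact index set on which these functionals live.

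The engine of the argument is a Simons type sup--limsup inequality, for which the sufficiency relation $Z\geH X$ supplies exactly the required boundary condition. Indeed, \eqref{extreme} together with symmetry of $\Ha$ says that every finite convex combination $w=\sum_i\delta_i h_i$ of members of $\Ha$ satisfies $\sup_X w=\sup_Z w$, and by uniform approximation this persists for the countable convex series that occur in Simons' lemma. Arguing by contradiction, if $(h_n(x_0))$ failed to converge for some $x_0\in X$ I would pass to subsequences and form $y_k=h_{p_k}-h_{q_k}$, so that $\tfrac12 y_k$ is a convex combination of elements of $\Ha$, $y_k\to0$ pointwise on $Z$, yet $y_k(x_0)\ge\delta>0$. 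Feeding $(y_k)$ into the sup--limsup inequality gives $\sup_X\limsup_k y_k=\sup_Z\limsup_k y_k=0$, whence $\limsup_k y_k(x_0)\le0$, contradicting $y_k(x_0)\ge\delta$. This yields convergence of $(h_n)$ at every point of $X$.

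The main obstacle, and the reason the hypothesis on $\mathcal R$ is indispensable, is that one cannot simply pass the pointwise limit through the integral $\int h_n\,dm_x$: bounded convergence is false for finitely additive measures, so a direct dominated convergence argument is unavailable, which is precisely what forces the detour through Simons' inequality. Its own hypothesis, in turn, demands that the sequence be uniformly bounded, and establishing this is where the work concentrates. The maximum principle $\sup_X\abs{h_n}=\sup_Z\abs{h_n}$ coming from \eqref{extreme} reduces the uniform bound to a uniform estimate of the continuous functionals $m\mapsto\int h_n\,dm$ on the compact set $\mathcal R$, and it is the $w^*(\Ha)$ compactness of $\mathcal R$, together with the fact that it meets every fibre $\mathscr R_x(\Ha;Z)$, that is meant to deliver this control and to transport the resulting convergence back from $\mathcal R$ to $X$. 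Making this last transfer rigorous, reconciling the finitely additive representation with the uniform boundedness needed by Simons, is the delicate point of the proof.
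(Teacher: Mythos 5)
Your reduction, via symmetry of $\Ha$, of \eqref{choquet} to the exact identity $h(x)=\int h\,dm_x$ is correct and is also the starting point of the paper's argument, but from there your route has genuine gaps and is, by your own admission, not completed. First, Simons' sup--limsup inequality needs more than the equality $\sup_Xw=\sup_Zw$ for convex combinations: it requires that every countable convex combination of the sequence \emph{attain} its supremum at a point of $Z$ (this attainment is exactly what distinguishes a boundary in the sense of \cite{godefroy} from a merely norming set), whereas Definition \ref{def suff} supplies only the inequality \eqref{extreme}, with no attainment whatsoever. Second, Simons' lemma requires the sequence to be uniformly bounded; pointwise convergence on $Z$ does not yield this, and you explicitly leave the point unresolved. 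Third, and most tellingly, your argument never actually uses the hypothesis on $\mathcal R$ -- the sufficiency relation $Z\geH X$ alone would suffice for everything you invoke -- so whatever the missing step is, it cannot be recovered along the lines you sketch.

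The paper's proof takes a different and complete route that resolves precisely the obstacle you identify (the failure of bounded convergence for finitely additive measures), and it is here that $\mathcal R$ enters. Given a net $\net xaA$ in $X$, one picks $m_a\in\mathcal R\cap\mathscr R_{x_a}(\Ha;Z)$; by $w^*(\Ha)$ compactness of $\mathcal R$ a cluster point $m$ lies in some fibre $\mathscr R_x(\Ha;Z)$, and the identity $h(x_\beta)=\int_Z h\,dm_\beta$ forces $h(x_\beta)\to h(x)$ along a subnet. Hence $(X,\tau(\Ha))$ is compact Hausdorff (using that $\Ha$ separates points), and so is the $\tau(\Ha)$ closed subset $Z$. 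Each $h\in\Ha$ restricts to a continuous function $\hat h$ on $Z$, and $\hat h\mapsto h(x)=\int_Z h\,dm_x$ is a positive norm-one functional on the continuous functions on $(Z,\tau(\Ha))$; by Riesz--Markov it is represented by a \emph{countably additive} regular Borel probability $\mu_x$ on $Z$, so that $h(x)=\int_Z h\,d\mu_x$. Bounded convergence for $\mu_x$ then transports pointwise convergence from $Z$ to $X$. In short, the paper does not avoid dominated convergence -- it upgrades the finitely additive representing measure to a countably additive one, which is the step your proposal is missing.
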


\begin{proof}
Of course $\Prob X$ is $w^*(\Ha)$ compact and,
under the present assumptions, so is $\mathcal R$. 
Let $\neta x$ be a net in $X$ and choose 
$m_a
	\in
\mathcal R\cap \Ring_{x_a}(\Ha;Z)$. 
By assumption, the corresponding net $\neta m$ 
admits a cluster point $m\in\mathcal R$ and an
element $x\in X$ such that $m\in\mathscr R_x(\Ha;Z)$. 
But then, along some subnet $\net x\beta{\mathfrak B}$
\begin{align*}
h(x)
	=
\int_Z h(z)m(dz)
	=
\lim_\beta\int_Z h(z)m_\beta(dz)
	=
\lim_\beta h(x_\beta)
\qquad
h\in\Ha.
\end{align*}
This implies that $(X,\tau(\Ha))$ is compact and 
Hausdorff, and that so is $(Z,\tau(\Ha))$. The
restriction $\hat h$ of each $h\in\Ha$ to $Z$ is
of course continuous and, by \eqref{choquet},
the value $h(x)$ of $h$ at $x$ may be viewed as 
the action $\phi_x(\hat h)$ on $\hat h$ of a 
continuous linear functional $\phi_x$ defined
on the class of all continuous functions over the
topological space $(Z,\tau(\Ha))$. Consequently
we may write $h(x)=\int h(z)\mu_x(dz)$ with 
$\mu_x$ a countably additive, regular probability 
measure on the Borel subsets of $Z$. But then 
pointwise convergence on $Z$ implies pointwise 
convergence on $X$ as a simple consequence of 
bounded convergence. 
\end{proof}

If $X$ is $\tau(\Ha)$ compact and each $h\in\Ha$ 
is linear the condition of Corollary \ref{cor choquet}
holds true.

Specifying a linear structure induces further integral
representation results, near to the original findings
of Strassen \cite{strassen} and of Cartier et al. 
\cite{cartier_fell_meyer}.

\begin{lemma}
\label{lemma strassen}
Let $Z$ be a countable and symmetric subset of a 
topological vector space $X$ and $\Fa\subset\Fun X$ 
a pointwise bounded family of sublinear functionals. 
A linear functional $\varphi$ on $X$ which satisfies 
the condition
\begin{equation}
\label{finite w}
\big(\forall z\in Z\big)
\big(\forall E\subset\Fa,\text{ finite}\big)
\big(\exists f\in E^c\big):\ 
\varphi(z)\le f(z)
\end{equation}
admits the representation
\begin{equation}
\label{strassen w}
\varphi(y)
	=
\int_\Fa T(f,y)m(df),
\qquad
y\in\lin(Z)
\end{equation} 
in which
(a)
$m\in\Prob\Fa$ vanishes on finite sets and 
(b) 
$T\in\Fun{\Fa\times X}$ is such that $T(f,\cdot)$ 
is a linear functional $\le f$ for each $f\in\Fa$. 
If $X$ is an $F$- space and each 
$f\in\Fa$ is continuous, then \eqref{strassen w} 
extends to $\cl\lin(Z)$.
\end{lemma}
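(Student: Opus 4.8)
The plan is to realise \eqref{strassen w} in three movements: first produce a probability $m$ on $\Fa$ that vanishes on finite sets and dominates $\varphi$ in the averaged sense, then disintegrate this domination into a linear selection $T(f,\cdot)\le f$, and finally pass to $\cl{\lin(Z)}$ under the additional topological hypotheses. Throughout I would work on the vector space $\lin(Z)$, on which every $f\in\Fa$ restricts to a sublinear functional and $\varphi$ to a linear one.

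\emph{Domination by a measure vanishing on finite sets.} Fix a finite $E\subset\Fa$ and consider the tail $E^c=\Fa\setminus E$, which is pointwise upper bounded because $\Fa$ is. I would apply Theorem \ref{th dom} with $\Ga=\{\varphi\}$ and with $E^c$ in the role of $\Fa$, over the base set $\lin(Z)$. The key claim to establish is that condition \eqref{finite w} supplies the balance inequality \eqref{balance} for this tail, namely $\br{\varphi}{\delta}\le\sup_{f\in E^c}\br f\delta$ for every $\delta\in\Delta(\lin(Z))$; here I would use the linearity of $\varphi$, the subadditivity and positive homogeneity of each $f$, and the symmetry of $Z$ to pass from the $Z$-pointwise estimate $\varphi(z)\le\sup_{f\in E^c}f(z)$ to its convex-combination form. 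Theorem \ref{th dom} then yields $m_E\in\Print\Fa{\Ev{\lin(Z)}}$, concentrated on $E^c$, with $\varphi(y)\le\int_\Fa f(y)\,m_E(df)$ for all $y\in\lin(Z)$. Letting $E$ range over the finite subsets of $\Fa$ directed by inclusion, I would extract a weak$^*$ cluster point $m$ of the net $(m_E)$ in the compact space $\Prob\Fa$. Since $m_{E'}(E)=0$ whenever $E'\supseteq E$, the cluster point satisfies $m(E)=0$ for every finite $E$, so $m$ vanishes on finite sets; and because each $f\mapsto f(y)$ is a bounded function on $\Fa$ (pointwise boundedness), it is weak$^*$ continuous, so the domination $\varphi(y)\le\int_\Fa f(y)\,m(df)$ survives in the limit.

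\emph{Disintegration into linear pieces.} The functional $p(y)=\int_\Fa f(y)\,m(df)$ is sublinear on $\lin(Z)$ and dominates $\varphi$. For each $f$ let $S_f$ be the set of linear functionals $\le f$; its support function at $y$ is exactly $f(y)$, so the support function of the average $\int_\Fa S_f\,m(df)$ at $y$ equals $p(y)\ge\varphi(y)$. Hence $\varphi$ is a barycentre of this convex set, and a selection $f\mapsto T(f,\cdot)\in S_f$ with $\varphi=\int_\Fa T(f,\cdot)\,m(df)$ should exist; this selection step is itself an instance of the domination principle of Theorem \ref{th dom}, equivalently a Hahn--Banach argument. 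Because both $\varphi$ and $y\mapsto\int_\Fa T(f,y)\,m(df)$ are linear and agree on the spanning set $Z$, the equality \eqref{strassen w} then holds on all of $\lin(Z)$. When $X$ is an $F$-space and each $f$ is continuous, each $T(f,\cdot)\le f$ is continuous and $\varphi\le p$ is continuous as well; dominated convergence makes $y\mapsto\int_\Fa T(f,y)\,m(df)$ continuous, so \eqref{strassen w} propagates from $\lin(Z)$ to $\cl{\lin(Z)}$ by density.

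\emph{The main obstacle.} The delicate point is squarely the first movement. Condition \eqref{finite w} is only a pointwise statement at the individual points of the symmetric set $Z$, whereas Theorem \ref{th dom} demands the full balance inequality \eqref{balance}, which a priori encodes domination along the whole convex cone generated by $Z$; this passage does \emph{not} follow from a naive pointwise-to-convex estimate, and it is exactly here that the cofinal quantifier ``$\exists f\in E^c$'', the countability of $Z$, and the interplay of linearity of $\varphi$ with sublinearity of the members of $\Fa$ must be used in an essential way, simultaneously with arranging that the limiting measure vanishes on finite sets. Once the measure $m$ is in hand the disintegration and the extension to $\cl{\lin(Z)}$ are comparatively routine.
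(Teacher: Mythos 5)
There is a genuine gap, and it sits exactly where you placed your ``main obstacle'': your first movement requires the balance inequality \eqref{balance} for the sublinear tail family $E^c$, i.e.\ $\br{\varphi}{\delta}\le\sup_{f\in E^c}\br f\delta$ for every $\delta\in\Delta(\lin(Z))$, and this does not follow from \eqref{finite w}. The hypothesis only gives $\varphi(z)\le\sup_{f\in E^c}f(z)$ at individual points of $Z$ (and even there the maximizing $f$ varies with $z$). To pass to a convex combination $\sum_j\alpha_j y_j$ you would need $\sup_f\sum_j\alpha_jf(y_j)\ge\sum_j\alpha_j\varphi(y_j)$, but sublinearity gives $f(\sum_j\alpha_jy_j)\le\sum_j\alpha_jf(y_j)$, which points the wrong way; there is no route from the pointwise bounds on $Z$ to the balance condition over $\lin(Z)$ for a family of merely sublinear functionals. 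You correctly flag this as the crux but then leave it unresolved, so the proof does not go through as written. The same is true of your second movement: the ``disintegration'' of the sublinear envelope $p(y)=\int_\Fa f(y)m(df)$ into a selection $f\mapsto T(f,\cdot)\le f$ with $\varphi=\int_\Fa T(f,\cdot)\,m(df)$ is a nontrivial measurable-selection statement for a finitely additive $m$ on the power set of $\Fa$, and it is not an instance of Theorem \ref{th dom}; it is asserted, not proved.

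The paper's proof inverts your order of operations precisely to avoid both problems: it constructs $T$ \emph{before} any measure appears. Enumerating the countable set $Z$ as $z_1,z_2,\dots$ and using \eqref{finite w} recursively, one picks at each step a \emph{fresh} $f_n\in\Fa\setminus\{f_1,\dots,f_{n-1}\}$ with $\varphi(z_n)\le f_n(z_n)$ and, by Hahn--Banach, a linear $t_n\le f_n$ touching $f_n$ at $z_n$, i.e.\ $t_n(z_n)=f_n(z_n)$; for the remaining $f$ one takes any linear $\chi_f\le f$. Theorem \ref{th dom} is then applied to the family $\{T(f,\cdot):f\in E^c\}$ of \emph{linear} functionals, for which convex combinations collapse ($\sum_j\alpha_jT(f,y_j)=T(f,\sum_j\alpha_jy_j)$), so the balance condition reduces to a pointwise one, and the symmetry of $Z$ upgrades the resulting inequality to the equality \eqref{strassen w} on $\lin(Z)$. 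The freshness of the $f_n$ is also what forces the measures $m_E$ to live on $E^c$ and hence the cluster point to vanish on finite sets (your compactness argument for this last point is fine and matches the paper's finite-intersection argument). Finally, for the $F$-space case the paper argues via uniform boundedness of $\Fa$, hence of $\{T(f,\cdot)\}$, and uniform convergence; your appeal to ``dominated convergence'' for a finitely additive $m$ and a net in $y$ would need the same uniform bound to be made rigorous.
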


\begin{proof}
We easily obtain from Hahn-Banach a family $\{\chi_f:f\in\Fa\}$ 
of linear functionals on $X$, each of which satisfying 
the inequality $\chi_f\le f$. Let $z_1,z_2,\ldots$ be 
an enumeration of $Z$. Proceeding recursively, for 
each $n\in\N$ we can, by \eqref{finite w}, choose 
$f_n\in\Fa\setminus\{f_1,\ldots,f_{n-1}\}$ such that 
$\varphi(z_n)\le f_n(z_n)$ and, using again Hahn 
Banach, obtain a linear functional $t_n\le f_n$ 
defined on $X$ and such that $t_n(z_n)=f_n(z_n)$. 
Define $T(f,z)$ implicitly by letting 
\begin{equation}
T(f_n,z)=t_n(z),\ 
n=1,2,\ldots
\qtext{or else}
T(f,z)=\chi_f(z),
\quad
f\notin\{f_1,f_2,\ldots\}.
\end{equation}
By construction,
\begin{align*}
T(f,\cdot)\le f,
\qquad
f\in\Fa
\qqand
\varphi(y)
	\le
\inf_{\{E\subset\Fa:\text{ finite}\}}\sup_{f\notin E}T(f,y),
\qquad
y\in\lin(Z).
\end{align*}
As an immediate consequence of Theorem \ref{th dom}
and symmetry of $Z$ we deduce that for each finite 
$E\subset\Fa$ there exists $\widebar m_E\in\Prob{E^c}$ 
such that the probability $m_E\in\Prob\Fa$, defined 
by letting 
$m_E(A)
	=
\widebar m_E(A\cap E^c)$,
satisfies \eqref{strassen w}. The family of weak$^*$ closed 
subsets of $\Prob\Fa$ obtained by letting $E$ range over 
all finite subsets of $\Fa$ has the finite intersection property 
so that the claim follows. This establishes the first claim. If
$X$ is an $F$-space, then $\Fa$ 
is uniformly bounded and, by the inequality $T(f,\cdot)\le f$, 
so is the family $T(f,\cdot)$ for $f\in\Fa$. The last claim 
follows from uniform convergence.
\end{proof}

Condition \eqref{finite w} is satisfied e.g. if $\F$ is an infinite,
$X$-separable set and if
$\varphi(z)<\sup_{f\in\Fa}f(z)$
for each $z\in Z$.

While Lemma \ref{lemma strassen} does not use any
form of measurability, if we introduce some topological
assumptions we obtain a representation similar to that 
of Strassen \cite[Theorem 1]{strassen}. The main d
ifference is that in our formulation it is not assumed
the existence of an {\it a priori} given probability space 
on $\Fa$. We denote by $\cl[X]\Ha$ the $X$-closure of 
$\Ha\subset\Fun X$ and by $\Bor_X(\Ha)$ the 
$\sigma$ algebra generated by the $X$- open subsets 
of $\Ha$.

\begin{theorem}[Strassen]
\label{th strassen}
Let $X$ be a real vector space and $\Fa\subset\Fun X$ 
a pointwise bounded family of sublinear functionals. 
A linear functional $\varphi$ on $X$ satisfies the 
condition
\begin{equation}
\label{strassen dom}
\varphi(x)\le\sup_{f\in\Fa}f(x),
\qquad
x\in X
\end{equation}
if and only if it may be represented in the form
\begin{equation}
\label{strassen}
\varphi(x)
	=
\int T(x,f)\lambda(df),
\qquad
x\in X
\end{equation}
in which
\begin{enumerate}[(a).]
\item
$\lambda$ is a Radon probability on
$\Bor_X\big(\cl[X]\Fa\big)$;
\item
$T(x,\cdot)$ is $\Bor_X\big(\cl[X]\Fa\big)$ measurable
for each $x\in X$;
\item
for each $x,y,z\in X$ and $a,b\in\R$ there exists
a $\lambda$ null set $N(a,b;y,z)\in\Bor_X(\cl[X]\Fa)$ 
such that
\begin{equation}
\label{HB}
T(x,f)\le f(x)
\qand
T(ay+bz,f)
	=
aT(y,f)+bT(z,f)
\qquad
f\notin N(a,b;y,z).
\end{equation}
\end{enumerate}
Moreover, if $X$ is a separable topological vector 
space, $\Fa$ is $X$-closed and $\varphi$ and each 
$f\in\Fa$ are continuous, then one may choose $T$ 
such that \eqref{HB} holds for all $f$ outside some
fixed $\lambda$ null set and, if $X$ is an $F$-space, 
even for all $f\in\Fa$.
\end{theorem}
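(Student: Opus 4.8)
The \emph{if} direction is immediate. Given \eqref{strassen}, condition (c) provides, for each fixed $x$, a $\lambda$ null set off which $T(x,f)\le f(x)$; integrating this $\lambda$-almost-everywhere inequality yields $\varphi(x)=\int T(x,f)\lambda(df)\le\int f(x)\lambda(df)\le\sup_{f\in\Fa}f(x)$, the last step because $f\mapsto f(x)$ is $X$-continuous and $\Fa$ is $X$-dense in $\cl[X]\Fa$. This is \eqref{strassen dom}. For the converse I would first compactify. Put $K=\cl[X]\Fa$ and $c_x=\sup_{f\in\Fa}\abs{f(x)}<\infty$; by Tychonoff $K\subset\prod_{x}[-c_x,c_x]$ is $X$-compact, and since subadditivity and positive homogeneity survive $X$-limits, every $f\in K$ is again sublinear, so \eqref{strassen dom} reads $\varphi(x)\le\max_{f\in K}f(x)$. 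For $f\in K$ let $L(f)$ be the set of linear functionals $t\le f$; it is nonempty (Hahn--Banach) and satisfies $\sup\{t(x):t\in L(f)\}=f(x)$ for every $x$, since a sublinear $f$ is the upper envelope of the linear functionals it dominates. As $-c_x\le-f(-x)\le t(x)\le f(x)\le c_x$ on $L(f)$, the graph $\Gamma=\{(f,t):f\in K,\ t\in L(f)\}$ is a closed, hence $X$-compact, subset of $K\times\prod_x[-c_x,c_x]$, carrying the continuous evaluations $(f,t)\mapsto t(x)$ and the projection $\pi(f,t)=f$.

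The core step is to produce a Radon probability $\Lambda$ on $\Gamma$ representing $\varphi$ in its second coordinate:
\begin{equation}
\label{bary}
\int_\Gamma t(x)\,\Lambda\big(d(f,t)\big)=\varphi(x)\qquad x\in X.
\end{equation}
I would obtain \eqref{bary} by a finite-intersection argument. For a finite $S\subset X$ set $V=\lin(S)$. As $\Lambda$ ranges over the set $\mathfrak P(\Gamma)$ of Radon probabilities on the compact space $\Gamma$, the barycenters $x\mapsto\int_\Gamma t(x)\,\Lambda(d(f,t))$ restrict on $V$ to a compact convex subset $B_V\subset V^*$ (a continuous image of the weak$^*$ compact $\mathfrak P(\Gamma)$). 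Were $\varphi|_V\notin B_V$, a separating $v\in V$ would give $\varphi(v)>\sup_{(f,t)\in\Gamma}t(v)=\sup_{f\in K}f(v)=\sup_{f\in\Fa}f(v)$, contradicting \eqref{strassen dom}; hence $\varphi|_V\in B_V$, i.e. \eqref{bary} holds on $V$ for some $\Lambda$. The sets $\{\Lambda:\eqref{bary}\text{ holds on }\lin(S)\}$ are weak$^*$ closed and, since the union $S_1\cup\dots\cup S_k$ witnesses any finite intersection, have the finite intersection property; weak$^*$ compactness of $\mathfrak P(\Gamma)$ then delivers a single $\Lambda$ satisfying \eqref{bary} for all $x$.

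From $\Lambda$ I would read off the conclusion. Let $\lambda=\pi_*\Lambda$, a Radon probability on $\Bor_X(\cl[X]\Fa)$, giving (a). For each fixed $x$ define $T(x,\cdot)$ to be the Radon--Nikodym derivative with respect to $\lambda$ of the measure $B\mapsto\int_{\pi^{-1}(B)}t(x)\,d\Lambda$, that is, the $\lambda$-conditional expectation of $(f,t)\mapsto t(x)$ given $\pi$; this is $\Bor_X(\cl[X]\Fa)$ measurable, settling (b), and integrating it recovers $\int T(x,f)\lambda(df)=\int_\Gamma t(x)\,d\Lambda=\varphi(x)$, which is \eqref{strassen}. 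Condition (c) follows because conditioning preserves what already holds on $\Gamma$: from $t(x)\le f(x)$ there one gets $T(x,\cdot)\le f$ $\lambda$-a.e., and from the identity $t(ay+bz)=at(y)+bt(z)$ on $\Gamma$ together with linearity of conditional expectation one gets $T(ay+bz,\cdot)=aT(y,\cdot)+bT(z,\cdot)$ off a $\lambda$ null set $N(a,b;y,z)$. Using a conditional expectation rather than a full disintegration of $\Lambda$ is exactly what yields the relation-dependent null sets of (c): a single null set would require a disintegration of $\Lambda$ over $\lambda$, which need not exist for the non-metrizable $X$ at hand, so this choice is essential, not merely convenient.

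For the final assertions, assume $X$ separable with $\Fa=\cl[X]\Fa$ and $\varphi$ and each $f\in\Fa$ continuous. Choosing a countable $\mathbb Q$-linearly dense $D\subset X$, the countably many instances of (c) with $y,z\in D$ and $a,b\in\mathbb Q$ share a single null set $N$ (their union); off $N$, $T(\cdot,f)$ is $\mathbb Q$-linear and $\le f$ on $D$, and since $T(\cdot,f)\le f$ with $f$ continuous forces $T(\cdot,f)$ continuous, density upgrades \eqref{HB} to all of $X$ for every $f\notin N$. If $X$ is an $F$-space, the uniform boundedness of $\Fa$, hence of the $T(\cdot,f)$, permits removing $N$ entirely by the uniform-convergence argument already used for Lemma \ref{lemma strassen}. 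The hard part throughout is the construction of $\Lambda$: it is the $X$-compactness of $K$ and $\Gamma$ that promotes finite additivity to a genuine Radon probability, while the supporting-functional identity $\sup_{t\le f}t(v)=f(v)$ together with finite-dimensional separation is what pins $\varphi$ down \emph{exactly}, rather than merely dominating it, on every $\lin(S)$.
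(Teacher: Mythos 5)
Your argument for the main equivalence is correct but takes a genuinely different route from the paper's. The paper first invokes Theorem \ref{th dom} to dominate $\varphi$ by a mixture $\int_\Fa f(\cdot)m(df)$, upgrades $m$ to a Radon probability $\lambda$ on $\Bor_X(\cl[X]\Fa)$ via Riesz--Markoff, and then reruns Strassen's original argument: a Hahn--Banach extension of $\varphi$ to the space of $X$-valued simple functions on $\Fa$, dominated by the sublinear functional $h\mapsto\int f(h(f))\lambda(df)$, produces additive measures $\mu_x\ll\lambda$ whose Radon--Nikodym derivatives are the $T(x,\cdot)$. You instead build the compact graph $\Gamma$ of pairs consisting of a sublinear $f$ and a linear $t\le f$, obtain a Radon probability on $\Gamma$ whose barycentre is exactly $\varphi$ by finite-dimensional separation together with weak$^*$ compactness and the finite intersection property, and recover $T$ as a conditional expectation given the projection onto $\cl[X]\Fa$. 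This bypasses both Theorem \ref{th dom} and the Hahn--Banach step on simple functions; the domination $T(x,\cdot)\le f(x)$ and the almost-everywhere linearity then fall out of the pointwise identities on $\Gamma$ rather than from the sublinearity of the dominating functional. The key separation step is sound: the supporting-functional identity $\sup\{t(v):t\le f\}=f(v)$ is exactly what converts \eqref{strassen dom} into exact representability of $\varphi$ on each finite-dimensional subspace. Your separable refinement coincides with the paper's (countable union of null sets, continuous extension from a dense rational subspace), though you should say explicitly that after replacing $T(\cdot,f)$ by its continuous extension one must re-verify \eqref{strassen} at points outside the dense set, which the paper does by bounded convergence.

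The one genuine gap is the final $F$-space clause. Removing the exceptional null set $N$ is not a uniform-convergence issue: one must redefine $T(\cdot,f)$ for $f$ in $\widebar N$ as a linear functional dominated by $f$ \emph{while keeping} $f\mapsto T(x,f)$ measurable for $\Bor_X(\Fa)$, i.e.\ one needs a measurable selection of dominated linear functionals over the exceptional set. Hahn--Banach supplies such a functional for each $f$ separately but says nothing about measurability in $f$, and the argument you cite from Lemma \ref{lemma strassen} addresses a different point (extending the representation from $\lin(Z)$ to $\cl\lin(Z)$ by uniform convergence). The paper devotes the last part of its proof precisely to this selection: it covers $\widebar N$ by finitely many balls of radius $2^{-k}$ in the metrizable $X$-topology on the set $\Psi$ of dominated sublinear functionals, attaches to each ball a linear functional below its centre, forms the simple measurable maps $V^k$, and extracts a pointwise convergent subsequence whose limit is linear, dominated and measurable. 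Some device of this kind is indispensable; as written, your last sentence asserts the conclusion without an argument.
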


\begin{proof}
Given that \eqref{strassen dom} remains unchanged
if we replace $\Fa$ with its $X$-closure, we can assume
with no loss of generality that $\Fa$ is $X$-closed and 
thus $X$-compact as well as Hausdorff. By Theorem 
\ref{th dom} we can write
$\varphi(x)
	\le
\int_\Fa f(x)m(df)$ 
for some $m\in\Print\Fa{\Ev X}$ and all $x\in X$. 
Evaluators are continuous functions of $\Fa$ if 
the latter set is given the $X$-topology and therefore
by the Riesz-Markoff representation Theorem, we may 
replace $m$ with a regular Borel (and thus Radon)
probability $\lambda\in\Prob{\Bor_X(\Fa)}$. The rest 
of the proof is very similar to the original proof of
Strassen. If $L$ is the vector subspace of 
$\Fun{\Fa,X}$ spanned by elements of the form 
$x\set E$, with $E\in\Bor_X(\Fa)$, then $\varphi$ 
admits a linear extension $\widetilde\varphi$ to 
$L$ satisfying
\begin{equation}
\label{simple}
\widetilde\varphi(h)
	\le
\int f\big(h(f)\big)\lambda(df),
\qquad
h\in L.
\end{equation}
This follows from the Hahn-Banach Theorem once
observed that the right hand side of \eqref{simple}
is sublinear on $L$. Write 
$\mu_x(E)
	=
\widetilde\varphi(x\set E)$. 
Given that $\mu_x$ is additive and that $\mu_x\ll\lambda$ 
we conclude that $\mu_x$ is itself a regular Borel measure 
on $\Bor_X(\Fa)$ admitting a Radon-Nikodym derivative 
denoted by $T_x$. Write $T(x,f)=T_x(f)$. 
Properties \tiref a and \tiref b are clear; \tiref c follows
from the linearity of $\widetilde\varphi$. 
Sufficiency is also clear since, by \tiref c,
\begin{align*}
\varphi(x)
	=
\int T(x,f)\lambda(df)
	\le
\int f(x)\lambda(df)
	\le
\sup_{f\in\Fa}f(x).
\end{align*}

Assume now that $X$ is a separable topological 
vector space and denote by $X_0$ the countable, 
rational vector subspace of $X$ which is dense in 
$X$. For each $f$ outside of the null set
$N
=
\bigcup_{a,b\in\Q,\ y,z\in X_0}N(a,b;y,z)$
$T(\cdot,f)$ is a linear functional on $X_0$ and 
$\le f$ (and thus continuous). Consider the extension
$T'(\cdot,f)$ of $T(\cdot,f)$ to the whole of $X$ 
obtained by continuity. Let $U(x,f)=T'(x,f)\set{N^c}(f)$.
It is obvious that $U$ satisfies properties \tiref{a}--%
\tiref{c} for each $f\notin N$. At the same time, if 
$\seqn x$ is a sequence in $X_0$ converging 
to $x$ we have
\begin{align}
\varphi(x)
	=
\lim_n\varphi(x_n)
	=
\lim_n\int_{N^c} T(x_n,f)\lambda(df)
	=
\int U(x,f)\lambda(df)
\end{align}
by bounded convergence.

Assume eventually that $X$ is an $F$ space and 
consider the set $\Psi$ of sublinear functionals 
satisfying
\begin{equation}
\psi(x)
	\le
\sup_{f\in\Fa}f(x),
\qquad
x\in X.
\end{equation}
$\Psi$ is of course pointwise bounded, $X$-closed and 
each $\psi\in\Psi$ is continuous, by uniform boundedness. 
If $X$ is separable the $X$-topology on $\Psi$ is metrizable, 
\cite[s. 307, p. 267 ]{tkachuk}. Consider covering the 
closure of the above set $N$ with finitely many balls of 
radius $2^{-k}$ with centres $h^k_1,\ldots,h^k_I$ and, 
for each $i=1,\ldots,I$, let $\chi_i^k$ be a linear functional
$\le h^k_i$. Let $E_1^k,\ldots,E^k_I$ be the disjoint
collection obtained by the cover above and define
\begin{equation}
V^k(x,f)
	=
\sum_{i=1}^I\chi^k_i(x)\set{E_i^k}(f),
\qquad
f\in\widebar N,\ 
x\in X.
\end{equation}
Of course $V^k(x,\cdot)$ is $\Bor_X(\Fa)$ measurable, 
$V^k(\cdot,f)$ is a linear functional in $\Psi$ for each 
$f\in\widebar N$ and 
$V^k(\cdot,f)
	\le 
h^k_f$ 
for some $h^k_f\in\Fa$ such that 
$\norm{h^k_f-f}
	\le
2^{-k}$. 
Because $\Psi$ is metrizable we can extract 
a subsequence (still indexed by $k$) which 
$X$-converges in $\Psi$ to a linear limit 
$V(\cdot,f)$. Observe that for fixed 
$x\in X$ we have 
$V(x,f)
	=
\lim_kV^k(x,f)
\le 
f(x)+\lim_n(h^k_f-f)(x)
=
f(x)$. 
Eventually, for each $x\in X$ the function $V(x,\cdot)$ 
is $\Bor_X(\Fa)$ measurable, since the pointwise limit
of measurable functions. The proof is then complete
upon replacing $T$ in \eqref{strassen} with
$U(x,f)+V(x,f)\set N(f)$.
\end{proof}

\section{Summable Functions}
\label{sec sum}
We introduce the following family of functions:

\begin{definition}
A function $g\in\Fun X$ is said to be summable along
$\Fa\subset\Fun X$, in symbols $g\in\mathcal S_\Fa(X)$, 
if the series $\sum_ng(x_n)a(x_n)$ converges for every 
sequence $\seqn x$ in $X$ and every $a\in\Fun{X}$ 
such that
\begin{equation}
\label{F sum}
\sup_{f\in\Fa}\sum_n\abs{f(x_n)a(x_n)}
	<
+\infty.
\end{equation}
\end{definition}

\begin{corollary}
\label{cor sum}
Let $\Fa\subset\Fun X$ be pointwise bounded.
Then, $g\in\mathcal S_\Fa(X)$ if and only if 
\begin{equation}
\label{F dom}
\abs{g(x)}
	\le
C_g\int_\Fa\abs{ f(x)}m_g(df)
\qquad
x\in X
\end{equation}
for some $C_g\ge0$ and $m_g\in\Print\F{\Ev X}$. If 
$\Fa$ is $X$-closed then $m_g$ may be chosen to be 
a regular, Borel probability on $\Bor_X(\Fa)$.
\end{corollary}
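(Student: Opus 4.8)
The plan is to characterise \eqref{F dom} as a balancedness inequality and then to read it off from Theorem \ref{th dom}, applied to the pointwise bounded family $\{\abs f:f\in\Fa\}$ and to $\Ga=\{\abs g\}$. Throughout I shall use that, $\Fa$ being pointwise bounded, so is $\{\abs f:f\in\Fa\}$, and that the evaluator $f\mapsto\abs{f(x)}$ is bounded, hence (in the closed case) $X$-continuous, on $\Fa$ for each $x$.

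The implication \eqref{F dom}$\Rightarrow g\in\mathcal S_\Fa(X)$ is the routine half. Assuming \eqref{F dom}, fix a sequence $\seqn x$ and $a\in\Fun X$ with $S=\sup_{f\in\Fa}\sum_n\abs{f(x_n)a(x_n)}<+\infty$. Since $m_g$ is a probability and the partial sums $\sum_{n=1}^N\abs{a(x_n)}\abs{f(x_n)}$ are dominated by $S$ uniformly in $f$, finite linearity and monotonicity of the finitely additive integral give
\begin{equation*}
\sum_{n=1}^N\abs{g(x_n)a(x_n)}
	\le
C_g\int_\Fa\Big(\sum_{n=1}^N\abs{a(x_n)}\abs{f(x_n)}\Big)m_g(df)
	\le
C_gS
\end{equation*}
for every $N$, whence $\sum_ng(x_n)a(x_n)$ converges absolutely. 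It is worth stressing that this uses only finite additivity and avoids any exchange of limit and integral, which would be illegitimate for $m_g$.

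For the converse I would first extract a finite summing constant, namely a $C_g\ge0$ with
\begin{equation}
\label{plan bal}
\sum_x\abs{g(x)}\delta(x)
	\le
C_g\sup_{f\in\Fa}\sum_x\abs{f(x)}\delta(x),
\qquad
\delta\in\Delta(X).
\end{equation}
Were no such $C_g$ to exist, then for each $j$ I could select finitely supported weights whose $\abs g$-mass $\sum_x\abs{g(x)}\delta(x)$ exceeds $2^j$ times their $\sup$-mass $\sup_{f\in\Fa}\sum_x\abs{f(x)}\delta(x)$; such a family has positive $\sup$-mass, since $\sup_f\abs{f(x)}=0$ would force $g(x)=0$ upon testing summability on the constant sequence at $x$. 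Rescaling to unit $\abs g$-mass makes the $\sup$-mass smaller than $2^{-j}$, and absorbing $\operatorname{sgn}g$ into the weights renders every product of $g(x)$ with its weight nonnegative. The essential point, and the main obstacle, is that in the definition of summability the multiplier $a$ is a \emph{function of the point}, so the blocks cannot simply be concatenated: instead I would superpose them into a single coefficient function $c\in\Fun X$ by summing, at each $x$, the weights of all occurrences of $x$. Because each such weight has sign $\operatorname{sgn}g(x)$, no cancellation occurs, so $\abs c$ is the superposition of the blockwise absolute weights; then $\sup_{f\in\Fa}\sum_x\abs{f(x)c(x)}\le\sum_j2^{-j}<\infty$ by subadditivity of the supremum, while $\sum_xg(x)c(x)=\sum_j1=+\infty$ is a divergent series of nonnegative terms. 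Enumerating $\operatorname{supp}(c)$ produces a genuine sequence together with the genuine function $a=c$ violating summability, a contradiction; this proves \eqref{plan bal}.

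Once \eqref{plan bal} is available, the conclusion follows by recasting it as the hypothesis of Theorem \ref{th dom}: by positive homogeneity \eqref{plan bal} is equivalent to the balancedness inequality \eqref{balance} for $\Ga=\{C_g^{-1}\abs g\}$ (the case $C_g=0$, i.e. $g=0$, being trivial) against the family $\{\abs f:f\in\Fa\}$. Theorem \ref{th dom} then furnishes a finitely additive probability with $C_g^{-1}\abs{g(x)}\le\int\abs{f(x)}\,m_g(df)$, which is \eqref{F dom} after transporting the measure back to $\Fa$ along $f\mapsto\abs f$. Finally, if $\Fa$ is $X$-closed then it is $X$-compact and Hausdorff, the evaluators $f\mapsto\abs{f(x)}$ lie in $C(\Fa)$, and the Riesz--Markov argument already used in the proof of Theorem \ref{th strassen} replaces the finitely additive $m_g$ by a regular Borel probability on $\Bor_X(\Fa)$ with the same integrals of these evaluators, preserving \eqref{F dom}. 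The only delicate step is the sign-consistent superposition above; everything else is the by now standard translation into Theorem \ref{th dom}.
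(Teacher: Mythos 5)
Your proposal is correct and follows essentially the same route as the paper: both halves reduce to extracting a constant $C_g$ with $\br{\abs g}{\delta}\le C_g\sup_{f\in\Fa}\br{\abs f}{\delta}$ for all $\delta\in\Delta(X)$ by superposing countably many finitely supported weights into a single multiplier function, and then invoking Theorem \ref{th dom} (plus Riesz--Markov for the Borel refinement); the paper merely packages the extraction as the continuity statement \eqref{cts} followed by homogeneity rather than as your direct contradiction. The one detail you leave implicit is that $c(x)=\sum_j c_j(x)$ is finite at every $x$, which follows from the two facts you already record, namely that $\sup_{f\in\Fa}\abs{f(x)}=0$ forces $c_j(x)=0$ while otherwise $\delta_j(x)\le 2^{-j}/\sup_{f\in\Fa}\abs{f(x)}$.
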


\begin{proof}
Assume that $g\in\mathcal S_\Fa(X)$ and let $\seqn x$ 
and $a\in\Fun X$ satisfy \eqref{F sum}. The series 
$\sum_ng(x_n)a(x_n)$ converges absolutely. Let 
$h_1,h_2,\ldots\in\Fun[0]{X,\R_+}$ be such that
\begin{equation}
\label{bb}
\sup_{f\in\Fa}\br {\abs f}{h_k}
	\le
2^{-k}
\qquad
k\in\N.
\end{equation}
Then,
\begin{equation}
+\infty
	>
\sup_{f\in\Fa}\sum_k\br{\abs f}{h_k}
	=
\sup_{f\in\Fa}\sum_x\abs{f(x)}\sum_kh_k(x)
	=
\sup_{f\in\Fa}\sum_n\abs{f(x_n)a(x_n)}
\end{equation}
where $x_1,x_2,\ldots$ is an enumeration of the countable 
set $\bigcup_k\{h_k>0\}$ and $a\in\Fun{X,\R_+}$ is defined
via
\begin{equation}
a(x)=\sum_kh_k(x)
\qtext{if}
\sup_{f\in\Fa}\abs{f(x)}+\abs{g(x)}>0
\qtext{or else}
a(x)=0.
\end{equation}
By assumption,
$+\infty
	>
\sum_n\abs{g(x_n)a(x_n)}
	=
\sum_k\br{\abs{g}}{h_k}$
and therefore $\lim_k\br{\abs{g}}{h_k}=0$. Since 
every sequence $\seq hk$  in $\Fun[0]{X,\R_+}$ for which
$\lim_k\sup_{f\in\Fa}\br{\abs f}{h_k}=0$ admits a 
subsequence satisfying \eqref{bb}, we conclude that
\begin{equation}
\label{cts}
\lim_k\sup_{f\in\Fa}\br{\abs f}{h_k}=0
\qtext{implies}
\lim_k\br{\abs{g}}{h_k}=0.
\end{equation}
Observing that the function $\br\cdot\cdot$ is separately 
homogeneous, we deduce that the inclusion 
$g\in\mathcal S_\Fa(X)$ implies the 
existence of $C_g>0$ such that 
$\br{\abs{g}}\delta
\le 
C_g\sup_{f\in\Fa}\br{\abs f}\delta$
for each $\delta\in\Delta(X)$ so that \eqref{F dom} 
follows from Theorem \ref{th dom}.

Conversely, if \eqref{F dom} holds, and if $\seqn x$ 
and $a\in\Fun X$ satisfy \eqref{F sum}, then 
\begin{align*}
+\infty
	>
C_g\sup_{f\in\Fa}\sum_n\abs{f(x_n)a(x_n)}
	\ge
C_g\sum_n\int_{\Fa}\abs{f(x_n)a(x_n)}m(df)
	\ge
\sum_n\abs{g(x_n)a(x_n)}.
\end{align*}
The last claim is an obvious consequence of well
known results once noted that $e_x$ is an $X$-continuous
function on $\Fa$ and that $\Fa$ is $X$-compact
by virtue of Tychonoff theorem.
\end{proof}

Corollary \ref{cor sum} is a fully non linear extension
of a well known result of Grothendieck-Pietsch 
\cite[p. 60]{diestel} which concerns $p$-summing 
operators with $p\ge1$, i.e. bounded linear operators 
$T\in\Fun{X,Y}$ ($Y$ a Banach space) which satisfy the
condition
\begin{equation}
\sum_n\norm{Tx_n}^p<\infty
\qtext{whenever}
\sum_n\abs{x^*x_n}^p<\infty
\qquad
x^*\in S_{X^*}.
\end{equation}
This criterion may be equivalently formulated as the
condition
\begin{equation}
\lim_k\sum_{x\in X}\norm{Tx}^ph_k(x)
	=
0
\qtext{whenever}
\lim_k\sup_{x^*\in S_{X^*}}\sum_{x\in X}\abs{x^*x}^ph_k(x)
	=
0
\end{equation}
for every sequence $\seq hk$ in $\Fun[0]{X,\R_+}$, see 
\cite[p. 59]{diestel}, which corresponds to the inclusion 
$g\in\mathcal S_\Fa(X)$ when $\Fa$ consists of element
of the form $f(x)=\abs{x^*x}^p$ for some $x^*\in S_{X^*}$ 
and $g(x)=\norm{Tx}^p$. Condition \eqref{F dom} is then 
a restatement of the inequality of Grothendieck and Pietsch.

Corollary \ref{cor sum} rests on an implicit Banach
space structure which is worth making explicit. Assume
that $\Fa$ is pointwise bounded and, with no loss
of generality, that $\sup_{f\in\Fa}\abs{f(x)}>0$ for
al $x\in X$. The space
\begin{equation}
\label{ell F X}
\ell_\Fa(X)
	=
\Big\{
h\in\Fun X:
\sup_{f\in\Fa}\sum_x\babs{f(x)h(x)}<+\infty
\Big\}
\end{equation}
contains then $\Fun[0] X$. Endowed with pointwise 
order and with the norm
\begin{equation}
\label{norm}
\norm h
	=
\sup_{f\in\Fa}\sum_x\babs{f(x)h(x)},
\end{equation}
$\ell_\Fa(X)$ becomes a Banach lattice on which the 
bilinear form
\begin{equation}
\label{inner}
\inner fh
	=
\sum_xf(x)h(x)
\qquad
f\in\Fa,\ h\in\ell_\Fa(X)
\end{equation}
permits to associate with each $f\in\Fa$ an element
of $S_{\ell_\Fa(X)^*}$. 

\begin{corollary}
\label{cor order cts}
Let $\Fa\subset\Fun X$ be a pointwise bounded set 
satisfying $\sup_{f\in\Fa}\abs{f(x)}>0$ for each 
$x\in X$. If $\varphi\in\ell_\Fa(X)^*$ then the 
associated function $T\varphi$ defined as
$T\varphi(x)
=
\varphi(\delta_x)$
belongs to $\mathcal S_\Fa(X)$. In addition, 
\begin{equation}
\label{order cts}
\varphi(h)
	=
\sum_{x\in X}T\varphi(x)h(x)
\qquad
h\in\ell_\Fa(X)
\end{equation}
if and only if $\varphi$ is order continuous
(in symbols $\varphi\in\ell_\Fa(X)^o)$.
\end{corollary}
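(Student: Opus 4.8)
The plan is to prove the two assertions separately, using throughout the criterion isolated in the proof of Corollary \ref{cor sum}: a function $g\in\Fun X$ lies in $\mathcal S_\Fa(X)$ precisely when there is a constant $C_g\ge0$ with
\begin{equation*}
\br{\abs g}\delta
	\le
C_g\sup_{f\in\Fa}\br{\abs f}\delta,
\qquad
\delta\in\Delta(X),
\end{equation*}
the passage from this inequality to the domination \eqref{F dom} being supplied by Theorem \ref{th dom}. To verify the first claim I fix $\varphi\in\ell_\Fa(X)^*$, write $g=T\varphi$ so that $g(x)=\varphi(\de x)$, and exploit a choice of signs. Given $\delta\in\Delta(X)$, pick $\sigma\in\Fun{X,\{-1,1\}}$ with $\sigma(x)g(x)=\abs{g(x)}$ and set $h=\sum_x\sigma(x)\delta(x)\de x\in\Fun[0]X$; then $\abs h=\delta$, so that $\norm h=\sup_{f\in\Fa}\br{\abs f}\delta$, and by linearity $\br{\abs g}\delta=\varphi(h)\le\norm\varphi\,\norm h$. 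This is exactly the criterion with $C_g=\norm\varphi$, and hence $T\varphi\in\mathcal S_\Fa(X)$.

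Next I record that the very same choice of signs yields $\sum_x\abs{g(x)}\eta(x)\le\norm\varphi\,\norm\eta$ for every $\eta\in\Fun[0]{X,\R_+}$; applying this to $\eta=\abs h\set F$ with $F\subset X$ finite and letting $F$ exhaust $X$ gives $\sum_x\abs{g(x)h(x)}\le\norm\varphi\,\norm h$ for all $h\in\ell_\Fa(X)$. Consequently $\psi(h)=\sum_x g(x)h(x)$ defines an element $\psi\in\ell_\Fa(X)^*$ with $\psi(\de x)=g(x)=\varphi(\de x)$, so $\varphi$ and $\psi$ coincide on $\Fun[0]X$. Since \eqref{order cts} is precisely the statement that $\varphi=\psi$, the proof reduces to showing that $\varphi=\psi$ holds if and only if $\varphi$ is order continuous.

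For the forward implication I would check that $\psi$ is itself order continuous: if $h_\alpha\downarrow0$ in $\ell_\Fa(X)$ then $h_\alpha(x)\downarrow0$ for each $x$, and as $\sum_x\abs{g(x)}h_{\alpha_0}(x)<\infty$ for a fixed index $\alpha_0$, the monotone convergence theorem applied along the decreasing net to the positive and negative parts of $g$ gives $\psi(h_\alpha)\to0$; thus $\varphi=\psi$ forces $\varphi\in\ell_\Fa(X)^o$. For the converse I assume $\varphi$ order continuous and treat $h\ge0$ first. With $h_F=h\set F$ indexed by the finite subsets $F\subset X$ directed by inclusion, one has $h-h_F=h\set{F^c}\downarrow0$, whence $\varphi(h_F)\to\varphi(h)$ by order continuity, while $\varphi(h_F)=\psi(h_F)\to\psi(h)$ because $h_F\in\Fun[0]X$ and the series defining $\psi$ converges absolutely. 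Thus $\varphi(h)=\psi(h)$ for $h\ge0$, and the general case follows from $h=h^+-h^-$.

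I expect the only delicate point to be the step $h\set{F^c}\downarrow0$, together with its analogue $h_\alpha(x)\downarrow0$ used in the forward direction: what is needed is that $0$ be the \emph{greatest lower bound} of the net $h\set{F^c}$ in the Banach lattice $\ell_\Fa(X)$, not merely its pointwise infimum. This holds because any lower bound $u$ satisfies $u\le h\set{F^c}$ for every $F$, so evaluating at a point $x$ with $F\ni x$ forces $u(x)\le0$, whence $u\le0$ and $0$ is indeed the infimum; the same ideal argument shows that the pointwise infimum of a decreasing net again lies in $\ell_\Fa(X)$. It is exactly this order-theoretic, rather than norm-theoretic, approximation of $h$ by finitely supported functions that the hypothesis of order continuity is designed to exploit, since $\Fun[0]X$ need not be norm dense in $\ell_\Fa(X)$.
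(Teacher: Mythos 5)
Your argument is correct and, at its core, coincides with the paper's: both proofs rest on the choice-of-signs estimate $\sum_{x\in F}\abs{T\varphi(x)h(x)}\le\norm{\varphi}\norm{h}$ over finite sets $F$, and both derive \eqref{order cts} from the order convergence to $h$ of its restrictions $h_\alpha$ to finite subsets of $X$, which is exactly where order continuity of $\varphi$ enters. The one genuine divergence is in the first claim: the paper concludes $T\varphi\in\mathcal S_\Fa(X)$ directly from the definition of summability, since any admissible pair $\big(\seqn x,a\big)$ satisfying \eqref{F sum} determines an element of $\ell_\Fa(X)$ of norm equal to the supremum in \eqref{F sum}, whereas you route the conclusion through the intermediate inequality $\br{\abs{T\varphi}}{\delta}\le\norm\varphi\sup_{f\in\Fa}\br{\abs f}\delta$, Theorem \ref{th dom} and the sufficiency half of Corollary \ref{cor sum}. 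Both routes are valid; the paper's is more economical, while yours has the small advantage of exhibiting the domination \eqref{F dom} with the explicit constant $C_g=\norm\varphi$. You are also more careful than the paper on two points it dismisses as clear: that the lattice infimum of a decreasing net in $\ell_\Fa(X)$ is its pointwise infimum (so that order convergence can be tested pointwise), and that the functional $h\mapsto\sum_xT\varphi(x)h(x)$ is itself order continuous, which is the substance of the forward implication. These verifications are sound and worth recording.
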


\begin{proof}
Let $\varphi\in\ell_\Fa(X)^*$, fix $h\in\ell_\Fa(X)$
and define $h_\alpha\in\ell_\Fa(X)$ as the restriction
of $h$ to some finite subset $X_\alpha$ of $X$. Then,
\begin{align}
\sum_{x\in X_\alpha}\abs{T\varphi(x)h(x)}
	=
\abs{\varphi\big(h_\alpha\sgn(T\varphi)\big)}
	\le
\norm\varphi\norm{h_\alpha}
	\le
\norm\varphi\norm h
\end{align}
we conclude that $T\varphi\in\mathcal S_\Fa(X)$. If $\varphi$
satisfies \eqref{order cts} it is clearly order continuous.
If, conversely, $\varphi$ is order continuous  then
the net $\neta h$ (with $\mathfrak A$ being directed
by inclusion of the finite subsets of $X$) is order 
convergent to $h$ so that
$\varphi(h)
	=
\lim_\alpha\varphi(h_\alpha)
	=
\lim_\alpha\sum_{x\in X_\alpha}T\varphi(x)h(x)
	=
\sum_{x\in X}T\varphi(x)h(x)$.
\end{proof}

The map $T$ defined in Corollary \ref{cor order cts} 
thus establishes a linear isomorphism between 
$\ell_\Fa(X)^o$ and $\mathcal S_\Fa(X)$.

\BIB{abbrv}


\begin{thebibliography}{10}

\bibitem{balcar_jech_pazak}
B.~Balcar, T.~Jech, and T.~Paz\'ak.
\newblock Complete ccc {B}oolean algebras, the order sequential topology, and a
  problem of von {N}eumann.
\newblock {\em Bull. London Math. Soc.}, 37:885--898, 2005.

\bibitem{berti_rigo_2004}
P.~Berti and P.~Rigo.
\newblock Convergence in distribution of non-measurable random elements.
\newblock {\em Ann. Probab.}, 32(1):365--379, 2004.

\bibitem{berti_rigo_21}
P.~Berti and P.~Rigo.
\newblock Finitely additive mixtures of probability measures.
\newblock {\em J. Math. Anal. Appl.}, 500(1):125114, 2021.

\bibitem{rao}
K.~P.~S. {Bhaskara Rao} and M.~{Bhaskara Rao}.
\newblock {\em Theory of Charges}.
\newblock Academic Press, London, 1983.

\bibitem{bishop_deleeuw}
E.~Bishop and K.~{De Leeuw}.
\newblock The representations of linear functionals by measures on sets of
  extreme points.
\newblock {\em Ann. Instit. Fourier}, 9:305--331, 1959.

\bibitem{bourgain_talagrand}
J.~Bourgain and M.~Talagrand.
\newblock Compacit\'e extr\^emale.
\newblock {\em Proc. Amer. Math. Soc.}, 80(1):68--70, 1980.

\bibitem{cartier_fell_meyer}
P.~Cartier, J.~M.~G. Fell, and P.-A. Meyer.
\newblock Comparaison des mesures port\'ees par un ensemble convexe compacte.
\newblock {\em Bull. Soc. Math. France}, 92:435--445, 1964.

\bibitem{JMAA_2009}
G.~Cassese.
\newblock Sure wins, separating probabilities and the representation of linear
  functionals.
\newblock {\em J. Math. Anal. Appl.}, 354(2):558--563, 2009.

\bibitem{choquet_56}
G.~Choquet.
\newblock Existence et unicit\'e des repr\'esentations int\'egrales au moyen
  des points extr\'emaux dans les c\^ones convexes.
\newblock {\em S\'em. Bourbaki}, 139:33--47, 1956.

\bibitem{delbaen}
F.~Delbaen.
\newblock Convex games and extreme points.
\newblock {\em J. Math. Anal. Appl.}, 45(1):210 -- 233, 1974.

\bibitem{diestel}
J.~Diestel.
\newblock {\em Sequences and Series in {Banach} Spaces}.
\newblock Springer, New York, 1984.

\bibitem{dynkin}
E.~B. Dynkin.
\newblock Sufficient statistics and extreme points.
\newblock {\em Ann. Probab.}, 6:705--730, 1978.

\bibitem{fan_53}
K.~Fan.
\newblock Minimax theorems.
\newblock {\em Proc. Natl. Acad. Sci.}, 39(1):42--47, 1953.

\bibitem{fan_56}
K.~Fan.
\newblock On systems of linear inequalities.
\newblock {\em Ann. Math. Stud.}, 38:99--156, 1956.

\bibitem{fan}
K.~Fan.
\newblock A minimax inequality and applications.
\newblock In O.~Shisha, editor, {\em Inequalities {III}}, pages 103--113.
  Academic Press, New York, 1972.

\bibitem{galvin_prikry}
F.~Galvin and K.~Prikry.
\newblock On {K}elley's intersection numbers.
\newblock {\em Proc. Amer. Math. Soc}, 129(2):315--323, 2001.

\bibitem{godefroy}
G.~Godefroy.
\newblock Boundaries of a convex set and interpolation sets.
\newblock {\em Math. Ann.}, 277(1):173--184, 1987.

\bibitem{ha}
C.-W. Ha.
\newblock Minimax and fixed point theorems.
\newblock {\em Math. Ann.}, 248(1):73--77, 1980.

\bibitem{horvath}
C.~Horvath.
\newblock Quelques th\'eor\`emes en th\'eorie des mini-max.
\newblock {\em C. R. Acad. Sc. Paris}, 310(1):269--272, 1990.

\bibitem{irle_85}
A.~Irle.
\newblock A general minimax theorem.
\newblock {\em Zeit. Oper. Res.}, 29(7):229--247, 1985.

\bibitem{kelley}
J.~L. Kelley.
\newblock Measures on {B}oolean algebras.
\newblock {\em Pacific J. Math.}, 9(4):1165--1177, 1959.

\bibitem{kindler_93}
J.~Kindler.
\newblock Intersection theorems and minimax theorems based on connectedness.
\newblock {\em J. Math. Anal. Appl.}, 178(2):529--546, 1993.

\bibitem{konig_68}
H.~K\"onig.
\newblock \"{U}ber das von {N}eumannsche minimax-theorem.
\newblock {\em Arch. Math.}, 19(5):482--487, 1968.

\bibitem{konig}
H.~K\"onig.
\newblock A general minimax theorem based on connectedness.
\newblock {\em Arch. Math.}, 59(1):55--64, 1992.

\bibitem{lang}
S.~Lang.
\newblock {\em Algebra. Revised third edition}.
\newblock Springer, New York, 2002.

\bibitem{le_cam_64}
L.~{Le Cam}.
\newblock Sufficiency and approximate sufficiency.
\newblock {\em Ann. Math. Stat.}, 35(4):1419 -- 1455, 1964.

\bibitem{pfitzner}
H.~Pfitzner.
\newblock Boundaries for {B}anach spaces determine weak compactness.
\newblock {\em Invent. Math.}, 182(3):585--604, 2010.

\bibitem{phelps}
R.~R. Phelps.
\newblock {\em Lectures on {C}hoquet's Theorem}, volume 1757 of {\em Lect.
  Notes Math.}
\newblock Springer-Verlag, Berlin--Heidelberg, 2000.

\bibitem{simons_94}
S.~Simons.
\newblock A flexible minimax theorem.
\newblock {\em Acta Math. Hung.}, 63(2):119--132, 1994.

\bibitem{sion}
M.~Sion.
\newblock On general minimax theorems.
\newblock {\em Pacific J. Math.}, 8(1):171--175, 1958.

\bibitem{strassen}
V.~Strassen.
\newblock The existence of probability measures with given marginals.
\newblock {\em Ann. Math. Stat.}, 36(2):423--439, 1965.

\bibitem{terkelsen}
F.~Terkelsen.
\newblock Some minimax theorems.
\newblock {\em Math. Scand.}, 31:405--413, 1972.

\bibitem{tkachuk}
V.~V. Tkachuk.
\newblock {\em A {$C_p$}-Theory Problem Book}.
\newblock Springer, New York, 2011.

\bibitem{neumann}
J.~von Neumann.
\newblock Zur {T}heorie der {G}esellshaftsspiele.
\newblock {\em Math. Ann.}, 100(1):295--320, 1928.

\bibitem{wald}
A.~Wald.
\newblock Foundations of a general theory of sequential decision functions.
\newblock {\em Econometrica}, 15(4):279--313, 1947.

\end{thebibliography}
\end{document}